\newcommand{\reals}{\mathbb{R}}
\newcommand{\Indc}{\mathbf{1}}
\newcommand{\E}{\mathbb{E}}
\newcommand{\Prob}{\mathbb{P}}
\newcommand{\Indicator}{\mathds{1}}
\newcommand{\n}[1]{%
\ifthenelse{\equal{#1}{}}{\frac{1}{n}}{\frac{1}{n^{#1}}}%
}
\newcommand{\Trans}{\mathrm{T}}
\newcommand{\diff}{\mathrm{d}}
\renewcommand{\epsilon}{\varepsilon}
\newcommand{\bX}{\mathbf{X}}
\newcommand{\ba}{\mathbf{a}}
\newcommand{\be}{\mathbf{e}}
\newcommand{\bg}{\mathbf{g}}
\newcommand{\bx}{\mathbf{x}}
\newcommand{\by}{\mathbf{y}}
\newcommand{\bz}{\mathbf{z}}
\newcommand{\bff}{\mathbf{f}}
\newcommand{\muhat}{\widehat{\mu}}
\newcommand{\bmu}{\boldsymbol{\mu}}
\newcommand{\bmuhat}{\widehat{\bmu}}
\newcommand{\bpsi}{\boldsymbol{\psi}}
\newcommand{\btheta}{\boldsymbol{\theta}}
\newcommand{\defeq}{\vcentcolon=}
\newcommand{\Y}{\mathcal{Y}}
\newcommand{\calH}{\mathcal{H}}
\newcommand{\A}{\mathcal{A}}
\newcommand{\X}{\mathcal{X}}
\newcommand{\N}{\mathcal{N}}
\renewcommand{\L}{\mathcal{L}}
\newcommand{\F}{\mathcal{F}}
\newcommand{\cP}{\mathcal{P}}
\newcommand{\G}{\mathcal{G}}
\newcommand{\I}{\mathcal{I}}
\newcommand{\R}{\mathbb{R}}
\newcommand{\norm}[1]{\left\lVert#1\right\rVert}
\DeclareMathOperator*{\argmin}{argmin}
\DeclareMathOperator*{\argmax}{argmax}
\def\ba{\mathbf{a}}
\def\bt{\mathrm{t}}
\def\bx{\mathbf{x}}
\def\bX{\mathbf{X}}
\def\bz{\mathbf{z}}
\def\bmu{\boldsymbol{\mu}}
\def\btheta{\boldsymbol{\theta}}
\DeclarePairedDelimiterX{\infdivx}[2]{(}{)}{%
  #1\;\delimsize\|\;#2%
}
\definecolor{linkcolor}{RGB}{0, 0, 255}
\newtheoremstyle{theoremstyle}
  {\topsep}{\topsep}{\itshape}{}{}{}{.5em}
  {\color{linkcolor}\ifthenelse{\equal{#3}{}}{{\bfseries #1 #2}}{{\bfseries #1 #2 (#3)}}\ \\}
\newtheoremstyle{examplestyle}
  {\topsep}{\topsep}{}{}{}{}{.5em}
  {\color{linkcolor}\ifthenelse{\equal{#3}{}}{{\bfseries #1 #2}}{{\bfseries #1 #2 (#3)}}}
\theoremstyle{theoremstyle}\newtheorem{theorem}{Theorem}
\theoremstyle{theoremstyle}
\newtheorem{corollary}[theorem]{Corollary}
\theoremstyle{theoremstyle}\newtheorem{lemma}{Lemma}
\theoremstyle{theoremstyle}
\theoremstyle{theoremstyle}
\theoremstyle{theoremstyle}\newtheorem{assumption}{Assumption}
\theoremstyle{theoremstyle}
\theoremstyle{theoremstyle}
\theoremstyle{theoremstyle}
\theoremstyle{examplestyle}
\theoremstyle{examplestyle}\newtheorem{remark}{Remark}
\theoremstyle{examplestyle}
\begin{document}

\title{Convergence Rates of Oblique Regression Trees for Flexible Function
  Libraries\thanks{
    The authors would like to thank
    Florentina Bunea, Sameer Deshpande, Jianqing Fan, Yingying Fan,
    Jonathan Siegel, Bartolomeo Stellato, and William Underwood
    for insightful discussions. The authors are particularly grateful to two
    anonymous reviewers whose comments improved the quality of the paper.
    MDC was supported in part by the National Science Foundation through
    SES-2019432 and SES-2241575.
    JMK was supported in part by the National Science Foundation through CAREER
    DMS-2239448, DMS-2054808, and HDR TRIPODS CCF-1934924.
}}
\author{Matias D. Cattaneo\thanks{Department of Operations Research and Financial Engineering, Princeton University.} \and
  Rajita Chandak\footnotemark[2]  \and
  Jason M. Klusowski\footnotemark[2]
}
\maketitle

\begin{abstract}
  We develop a theoretical framework for the analysis of oblique decision trees,
  where the splits at each decision node occur at linear combinations of
  the covariates (as opposed to conventional tree constructions that
  force axis-aligned splits involving only a single
  covariate). While this methodology has garnered significant attention from
  the computer science and optimization communities since the mid-80s,
  the advantages they offer over their axis-aligned
  counterparts remain only empirically justified, and explanations for their
  success are largely based on heuristics. Filling this long-standing gap
  between theory and practice, we show that oblique regression trees
  (constructed by recursively minimizing squared error) satisfy a type of oracle
  inequality and can adapt to a rich library of regression models consisting of
  linear combinations of ridge functions and their limit points. This provides a
  quantitative baseline to compare and contrast decision trees with other less
  interpretable methods, such as projection pursuit regression and neural
  networks, which target similar model forms. Contrary to popular belief, one
  need not always trade-off interpretability with accuracy. Specifically, we show
  that, under suitable conditions, oblique decision trees achieve similar
  predictive accuracy as neural networks for the same library of regression models.
  To address the combinatorial complexity of finding the optimal
  splitting hyperplane at each decision node, our proposed theoretical framework
  can accommodate many existing computational tools in the literature.
  Our results rely on (arguably surprising) connections
  between recursive adaptive partitioning and sequential greedy approximation algorithms
  for convex optimization problems (e.g., orthogonal greedy algorithms),
  which may be of independent theoretical interest. Using our theory and methods, we also study oblique random forests.
\end{abstract}

\sloppy
\section{Introduction}
Decision trees and neural networks are conventionally seen as two contrasting
approaches to learning.
The popular belief is that decision trees compromise accuracy for
being easy to use and understand,
whereas neural networks are more accurate, but at the cost of being less
transparent.
We challenge the \textit{status quo} by showing that, under suitable conditions,
oblique decision trees (also known as multivariate decision trees)
achieve similar predictive accuracy as
neural networks on the same library of regression models.
Of course, while it is somewhat subjective as to what one regards as being
transparent, it is generally agreed upon that
neural networks are less interpretable than decision trees
\citep{murdoch2019definitions,rudin2019interpretable}.
Indeed, trees are arguably more intuitive in their construction,
which makes it easier to understand how an output is assigned to a given input,
including which predictor variables were relevant in its determination.
For example, in clinical, legal, or business contexts,
it may be desirable to build a predictive model that mimics the way a
human user thinks and reasons, especially if the results
(of scientific or evidential value)
are to be communicated to a statistical lay audience.
Even though it may be sensible to deploy estimators that more directly target
the functional form of the model,
predictive accuracy is not the only factor the
modern researcher must consider when designing and building an automated
system.
Facilitating human-machine interaction and engagement  is
also an essential part of this process. To this end, the technique of knowledge distillation
\citep{Buciluundefined2006} is a quick and easy way to enhance the fidelity of
an interpretable model, without degrading the out-of-sample performance too
severely. In the context of decision trees and neural networks, one distills the
knowledge acquired by a neural network—which relies on nontransparent,
distributed hierarchical representations of the data—and expresses similar
knowledge in a decision tree that consists of, in contrast, easier to understand
hierarchical decision rules \citep{frosst2017distilling}.
This is accomplished
by first training a neural network on the observed data, and then, in
turn, training a decision tree on data generated from the fitted neural network model.

In this paper, we show that oblique regression trees
(constructed by recursively minimizing squared error)
satisfy a type of oracle inequality and can adapt to a rich library
of regression models consisting of linear combinations of ridge functions.
This provides a quantitative baseline to compare and
contrast decision trees with other less interpretable methods, such as
projection pursuit regression, neural networks, and boosting machines, which
directly target similar model forms. When neural network and decision tree
models are used in tandem to enhance generalization and interpretability,
our theory allows one to measure the knowledge distilled from a neural network
to a decision tree. Using our theory and methods, we also study oblique random forests.

\subsection{Background and Prior Work}

Let
$(y_1, \bx_1^\Trans), \ldots, (y_n, \bx_n^\Trans)$
be a random sample from a joint distribution
$ \Prob_{(y, \bx)} = \Prob_{y \mid \bx}\Prob_{\bx}  $
supported on
$\Y \times \X$.
Here
$ \bx = (x_1, \dots, x_p)^{\Trans}$
is a vector of
$ p $ predictor variables supported on
$ \X \subseteq \reals^p $ and
$ y $ is a real-valued outcome variable with range $\Y \subseteq \reals$.
Our objective is to compute an estimate of the
conditional expectation, $\mu(\bx) = \E[y \mid \bx]$,
a target which is optimal for predicting $ y $ from some function of $ \bx $ in
mean squared error.
One estimation scheme can be constructed by dividing the input space $ \X $
into subgroups based on shared characteristics of $ y $—something decision trees
can do well.

A decision tree is a hierarchically organized data structure
constructed in a top down, greedy manner through recursive binary splitting.
According to CART methodology \citep{breiman1984cart}, a
parent node $ \bt $ (i.e., a region in $\X$) in the tree is divided into two
child nodes, $ \bt_L $ and $ \bt_R $, by maximizing the decrease in
sum-of-squares error (SSE)
\begin{equation}
  \label{eq:sse}
  \widehat\Delta(b, \ba, \bt)
  =
  \frac{1}{n} \sum_{\bx_i \in \bt}(y_i - \overline y_{\bt})^2
  -
  \frac{1}{n}
  \sum_{\bx_i \in \bt}
  (y_{i} - \overline y_{\bt_L} \Indicator(\ba^\Trans\bx_i \leq b)
  - \overline y_{\bt_R} \Indicator(\ba^\Trans \bx_i > b))^2,
\end{equation}
with respect to $(b, \ba)$, with $\Indicator(\cdot)$ denoting the indicator
function and $ \overline y_{\bt} $ denoting the sample average of the $y_i$ data
whose corresponding $\bx_i$ data lies in the node $ \bt $. In the conventional
\emph{axis-aligned} (or, \textit{univariate}) \textit{CART} algorithm
\citep[Section 2.2]{breiman1984cart},
splits occur along values of a single covariate, and so the search space for $\ba$ is
restricted to the set of standard basis vectors in $ \reals^p $. In this case,
the induced partition of the input space $\X$ is a set of hyper-rectangles. On
the other hand, the \emph{oblique CART} algorithm
\citep[Section 5.2]{breiman1984cart}
allows for linear combinations of covariates, extending
the search space for $\ba$ to be all of $ \reals^p $. Such a procedure generates
regions in $\reals^p$ that are convex polytopes.

The solution of~\eqref{eq:sse} yields estimates $(\hat b, \hat \ba)$, and the
refinement of $\bt$ produces child nodes $ \bt_L = \{\bx \in \bt : \hat \ba^\Trans
\bx \leq \hat b\} $ and $ \bt_R = \{\bx \in \bt : \hat \ba^\Trans \bx > \hat b\} $.
These child nodes become new parent nodes at the next level of the tree and can
be further refined in the same manner until a desired depth is reached.
To obtain a maximal decision tree $T_K$ of depth $K$, the
procedure is iterated $K$ times or until either (i) the node contains a single data
point $(y_i, \bx^\Trans_i)$ or (ii) all input values $ \bx_i $ and/or all response
values $y_i$ within the node are the same.
The maximal decision tree with maximum depth is denoted by $T_{\text{max}}$.
An illustration of a maximal oblique
decision tree with depth $K = 2$ is shown in Figure~\ref{fig:tree}. For
contrast, in Figure~\ref{fig:aligned_tree}, we show a maximal axis-aligned
decision tree with depth $K = 2$.

In a conventional regression problem, where the goal is to estimate the
conditional mean response $ \mu(\bx) $,
the canonical tree output for $ \bx \in \bt $ is $ \overline y_{\bt} $,
i.e., if $ T $ is a decision tree,
then
\begin{equation}\label{eq:output}
  \muhat(T)(\bx)
= \overline y_{\bt}
=
\frac{1}{n(\bt)}\sum_{\bx_i \in \bt}y_i,
\end{equation}
where $n(\bt)$ denotes the number of observations in the node $\bt$.
However,
one can aggregate the data in each node in a number of ways,
depending on the form of the target estimand.
In the most general setting, under weak assumptions,
all of our forthcoming theory holds when the node
output is the result of a least squares projection onto the linear span of a
finite dictionary
$ \mathcal{H} $ that includes the constant function
(e.g., polynomials, splines),
that is,
$ \hat y_{\bt}
\in \argmin_{h \in\text{span}(\mathcal{H})}
\sum_{\bx_i \in \bt}(y_i - h(\bx_i))^2 $.

\begin{figure}[ht]
  \begin{subfigure}{0.4\textwidth}
  \centering
  \includegraphics[width=0.55\textwidth]{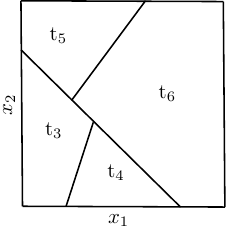}
  \end{subfigure}
  \hspace{0.4cm}
  \begin{subfigure}{0.4\textwidth}
    \centering
     \vspace{-0.4cm}
    \includegraphics[width=1.1\textwidth]{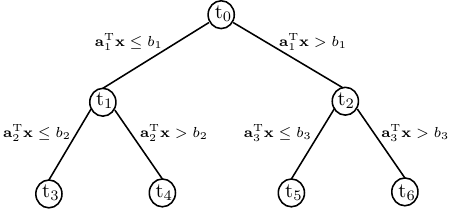}
  \end{subfigure}
  \caption{A maximal oblique decision tree with depth $ K = 2 $ in $ p = 2$
    dimensions.
    Splits occur along hyperplanes of the form $ a_1x_1 + a_2x_2 = b $.}~\label{fig:tree}
\end{figure}

\begin{figure}[ht]
  \begin{subfigure}{0.4\textwidth}
    \centering
    \includegraphics[width=0.55\textwidth]{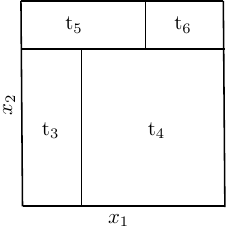}
  \end{subfigure}
  \hspace{0.4cm}
  \begin{subfigure}{0.4\textwidth}
    \centering
    \vspace{-0.4cm}
    \includegraphics[width=1.1\textwidth]{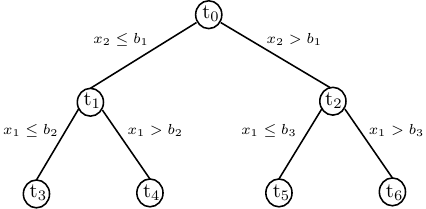}
  \end{subfigure}
  \caption{A maximal axis-aligned decision tree with depth $ K = 2 $ in $ p = 2$
    dimensions.
    Splits occur along individual covariates of the form $ x_j = b$ for $ j = 1, 2$.}~\label{fig:aligned_tree}
\end{figure}
\begin{figure}[ht]
  \centering
  \includegraphics[width=0.45\textwidth]{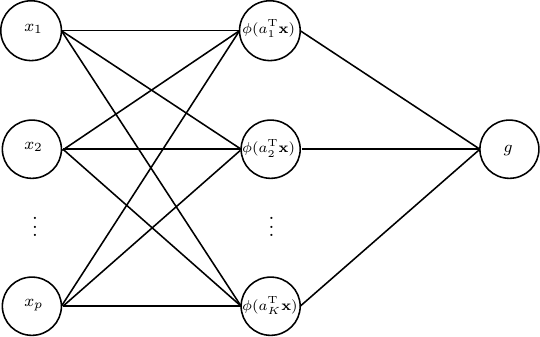}
  \caption{A single hidden layer neural network with $K$ hidden nodes.}
  \label{fig:nn_tree}
\end{figure}
One of the main practical issues with oblique CART is that the computational
complexity of minimizing the squared error in~\eqref{eq:sse} in each node is
extremely demanding (in fact, it is NP-hard). For example, if we desire to split
a node $ \bt $ with $ n(\bt) $ observations for axis-aligned CART,
an exhaustive search would require at most $ p \cdot n(\bt) $ evaluations,
whereas oblique CART would require a prodigious $2^p \binom{n(t)}{p}$ evaluations
\citep{murthy1994system}.

To deal with these computational demands, \citet{breiman1984cart} first suggested
a method for inducing oblique decision trees. They use a fully deterministic
hill-climbing algorithm to search for the best oblique split. A backward feature
elimination process is also carried out to delete irrelevant features from the
split. \citet{Heath93inductionof} propose a simulated annealing optimization
algorithm, which uses randomization to search for the best split to
potentially avoid getting stuck in a local optimum.
\citet{murthy1994system} use a combination of deterministic hill-climbing and
random perturbations in an attempt to find a good hyperplane.
See \citet{brodley1995multivariate} for additional variations on these
algorithms.
Other works employ statistical techniques like
linear discriminant analysis (LDA)
\citep{lopez2013fisher, li2003multivariate, loh1997split},
principle components analysis (PCA)
\citep{menze2011oblique, rodriguez2006rotation},
and random projections
\citep{tomita2020sparse}.

While not the focus of the present paper, regarding non-greedy training, other
researchers have attempted to find globally optimal tree solutions using linear
programming \citep{bennett1994global} or mixed-integer linear programming
\citep{bertsimas2017optimal, bertsimas2021near}. It should be clear that all of
our results hold verbatim for optimal trees, as greedy implementations belong to
the same feasible set. While usually better than greedy trees in terms
of predictive performance, scalability to large data sets is the most salient
obstacle with globally optimal trees. Moreover, on a qualitative level, a globally
optimal tree arguably detracts from the interpretability, as humans, in
contrast, often exhibit bounded rationality and therefore make decisions in a
more sequential (rather than anticipatory) manner
\citep[and references therein]{hullermeier2021automated}.
Relatedly, another training technique is based on
constructing deep neural networks that realize oblique decision trees
\citep{Lee2020Oblique, yang2018deep} and then utilizing tools designed for
training neural networks.

While there has been a plethora of greedy algorithms over
the past 30 years for training oblique decision trees, the literature is
essentially silent on their statistical properties. For instance, assuming one
can come close to optimizing~\eqref{eq:sse}, what types of regression
functions can greedy oblique trees estimate and how well?

\subsection{Ridge Expansions}
~\label{sec:fclass}
Many empirical studies reveal that oblique trees generally produce smaller trees
with better accuracy compared to axis-aligned trees
\citep{Heath93inductionof, murthy1994system}
and can often be comparable, in terms of performance, to neural networks
\citep{bertsimas2018optimal, bertsimas2019machine, bertsimas2021voice}.
Intuitively, allowing a tree-building system to use
both oblique and axis-aligned splits broadens its flexibility. To theoretically
showcase these qualities and make comparisons with other procedures (such as
neural networks and projection pursuit regression),
we will consider modeling $\mu$ with finite linear combinations of ridge functions,
i.e., the library
\begin{equation}
  \label{eq:fclass}
  \G = \Bigg\{
  g(\bx) = \sum_{k=1}^M g_{k}(\ba_k^{\Trans}\bx),\; \ba_k \in \R^p, \;
  g_k: \R \mapsto \R, \; k=1,\dots,M, \; M \geq 1, \; \|g\|_{\L_1} < \infty \Bigg\},
\end{equation}
where $\|\cdot\|_{\L_1}$ is a total variation norm that is defined in Section~\ref{sec:notation}.
This library encompasses the functions produced from projection
pursuit regression,
and, more specifically—by taking $ g_k(z) = \phi(z-b_k) $, where $ \phi $ is a
fixed activation function, such as a sigmoid function or ReLU, and $ b_k \in
\mathbb{R} $ is a bias parameter—single hidden layer feed-forward neural
networks. A graphical representation of such a neural network is provided in
Figure~\ref{fig:nn_tree}. A neural network forms predictions according to
distributed hierarchical representations of the data, whereas a decision tree
uses hierarchical decision rules (c.f., Figures~\ref{fig:tree}
and~\ref{fig:aligned_tree}).

Since the first version of our manuscript was released on \texttt{arXiv},
several subsequent papers have employed our novel theoretical and methodological
statistical framework to derive consistency results for decision tree and related methods.
For example, \cite{zhan2023consistency} applies our core ideas and proof
techniques to deduce a consistency result for oblique decision trees in
low-dimensional settings (c.f., Corollary \ref{cor:fixed_d} below), but under
stronger assumptions on the target function class $\G$ and without accounting
for the underlying optimization constraints (c.f., our novel optimization
framework in Section \ref{sec:optim framework}). \cite{raymaekers2023fast} also
applies our core ideas and proof techniques to deduce a consistency result for
axis-aligned decision trees within an alternative computation framework, but
under stronger assumptions on the target function class $\G$. Finally,
\cite{parhi2023deep} and \cite{devore2023weighted}, among others (see their
references), study consistency of deep neural network methods using similar
notions of Hilbert function spaces and total variation norms as our paper does
for adaptive decision trees and shallow neural networks, but without accounting
for the underlying optimization constraints. In particular, \cite{parhi2023deep}
also show that neural networks are able to adapt to sparsity in the data (c.f.,
Section \ref{sec:fast} below).

\section{Main Results} \label{sec:main}
We first introduce notation and assumptions that are used
throughout the remainder of the paper.

\subsection{Notation and Assumptions}
\label{sec:notation}
For a function $ f:\mathbb{R}^p \to \mathbb{R} $, we define
$ \mathbf{f} = (f(\bx_1), \dots, f(\bx_n))^{\Trans} $
to be the $ n\times 1$ vector of $ f $
evaluated at the design points $\bX = (\bx_1, \dots, \bx_n)^\Trans \in
\mathbb{R}^{n\times p} $. Likewise, we use $\bmuhat(T_K)$ to denote the
$n \times 1$ vector of fitted values of $ \widehat\mu(T_K) $.
For functions
$ f,g \in \mathscr{L}_2(\mathbb{P}_{\bx}) $,
let $ \|f\|^2 = \int_{\X} (f(\bx))^2d\mathbb{P}_{\bx}(\bx) $
be the squared $ \mathscr{L}_2(\mathbb{P}_{\bx}) $ norm
and let
$ \|\bff\|^2_n = \frac{1}{n} \sum_{i=1}^n (f(\bx_i))^2 $
denote the squared norm
with respect to the empirical measure on the data.
Let
$\langle \bff, \bg \rangle_n = \frac{1}{n}\sum_{i=1}^n f(\bx_i)g(\bx_i) $
denote the inner product
with respect to the empirical measure on the data.
The response data vector
$ \by = (y_1, \dots, y_n)^\Trans \in \mathbb{R}^n$
is viewed as a relation, defined on the design matrix
$\bX $,
that associates $ \bx_i $ with $ y_i $.
Thus, for example,
$ \|\by-\bff\|^2_n = \frac{1}{n}\sum_{i=1}^n (y_i-f(\bx_i))^2 $
and
$ \langle \by, \bff \rangle_n = \frac{1}{n}\sum_{i=1}^n y_i f(\bx_i) $.
We use
$[T]$
to denote the collection of internal (non-terminal)
nodes and
$\{\bt: \bt \in T\}$
to denote the terminal nodes of the tree.
The cardinality of a set $A$ is denoted by $ |A| $.

We define the total variation of a ridge function
$\bx \mapsto h(\ba^\Trans \bx)$
with $\ba \in \R^p $
and $ h: \mathbb{R} \to \mathbb{R} $
in the node $ \bt $ as
\[
V(h, \ba, \bt) = \sup_\cP \sum_{\ell=0}^{|\cP|-1}|h(z_{\ell+1})-h(z_{\ell})|,
\]
where the supremum is over all partitions
$ \cP = \{z_0, z_1, \dots, z_{|\cP|} \} $
of the interval
$ I(\ba, \bt) = [\min_{\bx \in \bt}\ba^\Trans\bx,\; \max_{\bx \in
  \bt}\ba^\Trans\bx] \subset \R $
(we allow for the possibility that one or both of the endpoints is infinite).
If the function $ h $ is smooth,
then $ V(h, \ba, \bt) $ admits the familiar integral representation
$ \int_{I(\ba, \bt)}|h^{\prime}(z)|\diff z $.
We can then define the $\L_1$ norm of an additive function
$h(\bx) = \sum_{k=1}^M h_k(\bx)$ as
\begin{equation*}
  \|h\|_{\L_1} = \sum_{k=1}^M V(h_k, \ba_k, \bt).
\end{equation*}
Central to our results is the
$\L_1 $ total variation norm of
$f \in \F = \text{cl}(\G)$ in the node $ \bt $, the closure being taken in
$\mathscr{L}_2(\Prob_{\bx})$.
This quantity captures the local capacity of a function in
$\F $. It is defined as
\begin{equation*}
  \|f\|_{\L_1(\bt)}
  \defeq
  \lim_{\epsilon \downarrow 0}\inf_{g\in \G}\Bigg\{
  \sum_{k=1}^M V(g_k, \ba_k, \bt) :
  g(\bx) = \sum_{k=1}^M g_k(\ba^{\Trans}_k\bx),
  \; \|f-g\| \leq \epsilon \Bigg\}.
\end{equation*}
For simplicity, we write $ \|f\|_{\L_1} $ for $\|f\|_{\L_1(\X)}$.
This norm may be thought of as an $\ell_1$ norm on the coefficients in a
representation of the function $f$
by elements of a normalized dictionary of ridge functions.
A classic result of \citet{barron1993universal} shows that, for any function $ f $
defined on $\X = [0,1]^p$, we have the bound
$\|f\|_{\L_1} \lesssim \int \|\btheta\|_{\ell_1} |\tilde f(\btheta)|\diff\btheta $,
where $\tilde f $ is the Fourier transform of $
f$ and $\|\cdot\|_{\ell_1} $ is the usual $ \ell_1 $
norm of a vector in $\R^p$. Furthermore, there exists an $M$-term linear
combination of sigmoidal ridge functions in $ \G $ whose $
\mathscr{L}_2(\mathbb{P}_{\bx}) $ distance from $ f $ is $
O\big(\|f\|_{\L_1}/\sqrt{M}\big) $.

\subsection{Computational Framework}\label{sec:optim framework}
As mentioned earlier, it is challenging to find the direction $\hat{\ba}$ that
optimizes $ \widehat\Delta(b, \ba, \bt) $. Many of the aforementioned
computational papers address the problem by restricting the search space to a
more tractable subset of candidate directions
$\A_{\bt}$
with sparsity
$$ \sup\{\|\ba\|_{\ell_0}: \ba \in \A_{\bt}\} \leq d,$$
for some positive integer $ d $,
where $\|\ba\|_{\ell_0}$ counts the number of nonzero coordinates of $\ba$.
Because such search
strategies are sometimes unlikely to find the global maximum, we
theoretically measure their success by specifying a sub-optimality (slackness)
parameter $\kappa \in (0, 1]$ and considering the probability
$ P_{\A_{\bt}}(\kappa) $
that the maximum of
$ \widehat\Delta(b, \ba, \bt) $
over
$ \ba \in \A_{\bt} \subseteq \reals^p $
is within
a factor $\kappa $ of the maximum of $ \widehat\Delta(b, \ba, \bt) $ on the
unrestricted parameter space, $ \ba \in \R^p $.
That is, to theoretically quantify the sub-optimality of the chosen hyperplane,
we measure
\begin{equation} \label{eq:split_prob}
  P_{\A_{\bt}}(\kappa)
  =
  \Prob_{\A_{\bt}} \Bigg(
  \max_{(b, \ba) \in \mathbb{R}\times \A_{\bt}}\widehat\Delta(b, \ba, \bt)
  \geq
  \kappa \max_{(b, \ba) \in \mathbb{R}^{1+p}}\widehat\Delta(b, \ba, \bt)
  \Bigg),
\end{equation}
where $ \Prob_{\A_{\bt}} $ denotes the probability with respect to
the randomness in the search spaces $ \A_{\bt}$, conditional on the data.
The maximum of
$ \widehat \Delta(b, \ba, \bt) $ over $ (b, \ba) $
is achieved because the number of distinct values of
$ \widehat \Delta(b, \ba, \bt) $ is finite
(at most the number of ways of dividing $ n $ observations into two groups,
or, $2^n-1$).

Another way of thinking about $ P_{\A_{\bt}}(\kappa) $
is that it represents the degree of optimization misspecificity of $ \A_{\bt}$
for the form of the global optimum $ \hat \ba $.
For example, if
$ \A_{\bt}= \{\mathbf{e}_1, \mathbf{e}_2, \dots, \mathbf{e}_p\}$ is the collection of
standard basis vectors in $ \R^p $, then
$ d = 1$ and we believe that the true optimal solution
$\hat \ba \in \A_{\bt}$
corresponds to axis-aligned CART, then $P_{\A_t}(\kappa)=1$
for all values of $\kappa$.

The definition of
$ P_{\A_{\bt}}(\kappa) $
can also be understood as a hypothesis test.
Consider the regression model
$
y =
\beta_1 \Indicator(\ba^\Trans\bx \leq b)
+
\beta_2 \Indicator(\ba^\Trans \bx > b)
+
\varepsilon $
with independent Gaussian noise $ \varepsilon \sim N(0, \sigma^2)$.
Set the null hypothesis
$ H_0: \hat{\ba} \in \A_{\bt}$.
Then, using the likelihood ratio test with threshold proportional to $1-\kappa$,
$ P_{\A_{\bt}}(\kappa) $ is the likelihood of failing to reject the null
hypothesis.
It follows that the smaller $\kappa$ is, the more likely it is that we will
reject the null hypothesis that $ \hat{\ba} $ belongs to $ \A_{\bt}$.

The collection $ \A_{\bt} $ of candidate directions can be chosen in
many different ways; we discuss some examples next.
\begin{itemize}
  \item \textbf{Deterministic.}
  If $\A_{\bt}$ is nonrandom, then $ P_{\A_{\bt}}(\kappa) $ is either zero or
  one for any $\A_{\bt}\subset \R^p$,
  and if $ \A_{\bt}= \mathbb{R}^p $, then $ P_{\A_{\bt}}(\kappa) = 1 $ for all
  $\kappa \in (0, 1] $.
  For the latter case, one can use strategies based on
  mixed-integer optimization (MIO)
  \citet{zhu2020scalable, dunn2018optimal, bertsimas2017optimal}.
  In particular, \citet{dunn2018optimal} presents a global MIO formulation for
  regression trees with squared error that can also be implemented greedily
  within each node. Separately, in order to improve interpretability,
  it may be of interest to restrict the coordinates of $ \hat\ba $ to be
  integers.
  Using the hyperplane separation theorem and the
  fact that constant multiples of vectors in $ \mathbb{Z}^p $ are dense in
  $\R^p$, it can easily be shown that if $ \A_{\bt}= \mathbb{Z}^p $,
  then $ P_{\A_{\bt}}(\kappa) = 1 $ for all
  $\kappa \in (0, 1] $.
  An integer-valued search space may also lend itself to optimization strategies
  based on integer programming.

\item \textbf{Purely random.}
  The most na\"ive and agnostic way to construct $ \A_{\bt} $ is to generate the
  directions uniformly at random.
  For example, with axis-aligned CART where the global search space consists
  of the $ p $ standard basis vectors
  $ \{\mathbf{e}_1, \mathbf{e}_2, \dots, \mathbf{e}_p\} $,
  if $ \A_{\bt}$ is generated by selecting
  $m\; (\leq p)$ standard basis vectors
  uniformly at random without
  replacement (as is done with random forests \citep{breiman2001random}), then $
  P_{\A_{\bt}}(\kappa) \geq \binom{p-1}{m-1}/\binom{p}{m} = m/p $ for all $ \kappa \in (0, 1] $.
  For more complex global search spaces (e.g., oblique),
  it is quite likely that a purely random selection will yield very small
  $ P_{\A_{\bt}}(\kappa) $. For example, if the global search space is $\{ \ba
  \in \mathbb{R}^p : \|\ba\|_{\ell_0}=d\} $ and $ \A_{\bt} $ is generated by
  selecting $m$ (distinct) sets $S_k \subset \{1, 2, \dots, p\} $ with $|S_k|=d$
  uniformly at random without replacement and  setting $ \A_{\bt} = \bigcup_k
  \{\ba \in \mathbb{R}^p : a_j = 0,\; j\notin S_k\} $, then
  $P_{\A_{\bt}}(\kappa) \geq m/\binom{p}{d} \approx 0 $ for all $ \kappa \in (0,
  1]$.
  This has direct consequences for the predictive performance, since,
  as we shall see (Section~\ref{sec:trainerr}), the expected risk is inflated by
  the reciprocal probability $ 1/P_{\A_{\bt}}(\kappa) $.
  Thus, generating $ \A_{\bt}$ in a principled manner is important for
  producing small risk.

\item \textbf{Data-dependent.}
  Perhaps the most interesting and useful way of generating informative
  candidate directions in $\A_{\bt}$ is to take a data-driven approach.
  One possibility is to use dimensionality reduction techniques,
  such as PCA, LDA, and Lasso, on a separate sample
  $ \{(\tilde y_i, \tilde \bx^\Trans_i): \tilde\bx_i \in \bt\} $.
  The search space $\A_{\bt}$ can then be defined in terms of the top principle
  components produced by PCA or LDA,
  or, similarly, in terms of the relevant coordinates selected by Lasso.
  Additional randomization can also be introduced by incorporating,
  for example, sparse random projections or random rotations \citep{tomita2020sparse}.
  On an intuitive level, we expect these statistical methods that aim to capture
  variance in the data to produce good optimizers of the objective
  function.
  Indeed, empirical studies with similar constructions provide evidence
  for their efficacy over purely random strategies
  \citep{ghosh2021efficient, menze2011oblique, rodriguez2006rotation}.
\end{itemize}

In order to control the predictive performance of the decision tree
theoretically,
we assume the researcher has chosen a meaningful method for
selecting candidate directions
$\A_{\bt}$, either with prior knowledge based on the context of the problem,
or with an effective data-driven strategy.

\subsection{Orthogonal Tree Expansions}

We now present a technical result about the construction of trees that is
crucial in proving our main results.
While Lemma~\ref{lmm:hilbert} below focuses on the special case of constant fit
at the terminal nodes for concreteness, all proofs in Appendix~\ref{sec:proofs}
are given in full generality. To be more precise, our results in the appendix
allow for any finite-dimensional least squares fit at the terminal nodes, and
thus give a general orthogonal tree expansion in the function space for adaptive
oblique decision trees, covering canonical adaptive axis-aligned decision trees
as a special case.

Lemma~\ref{lmm:hilbert} shows that the tree output
$ \muhat(T)(\bx) $
is equal to the empirical orthogonal projection of $\by$ onto the
linear span of orthonormal decision stumps, defined as
\begin{equation}
~\label{eq:stumps_norm}
  \psi_{\bt}(\bx)
  =
  \frac{ \Indicator(\bx \in \bt_L) n(\bt_R)
  - \Indicator(\bx \in \bt_R) n(\bt_L) }
  {\sqrt{w(\bt) n(\bt_L) n(\bt_R)} },
\end{equation}
for internal nodes $ \bt \in [T] $,
where
$w(\bt) = n(\bt)/n$ denotes the proportion of observations that are in
$\bt$.
By slightly expanding the notion of an internal node
to include the empty node (i.e., the empty set), we define
$ \psi_{\bt}(\bx) \equiv 1 $ if $ \bt $ is the empty node, in which case the
tree outputs the grand mean of all the response values.
The decision stump $ \psi_{\bt} $ in~\eqref{eq:stumps_norm}
is produced from the Gram–Schmidt orthonormalization of the functions
$ \{\Indicator(\bx \in \bt), \Indicator(\bx \in \bt_L) \} $ with respect to the
empirical inner product space:
\begin{align*}
  \Bigg\{
  \frac{\Indicator(\bx \in \bt)}{\|\Indicator(\bx \in \bt)\|_n}, \;
  \tfrac{\Indicator(\bx \in \bt_L)
  - \frac{\langle \Indicator(\bx \in \bt_L), \Indicator(\bx \in \bt) \rangle_n}
  {\|\Indicator(\bx \in \bt)\|^2_n}
  \Indicator(\bx \in \bt)}
  {\big\|\Indicator(\bx \in \bt_L) -
  \frac{\langle \Indicator(\bx \in \bt_L), \Indicator(\bx \in \bt)
  \rangle_n}
  {\|\Indicator(\bx \in \bt)\|^2_n}
  \Indicator(\bx \in \bt)\big\|_n}
  \Bigg\}
  &
    =
    \Bigg\{\frac{\Indicator(\bx \in \bt)}{\sqrt{w(\bt)}}, \;
    \frac{ \Indicator(\bx \in \bt_L) n(\bt_R)
    - \Indicator(\bx \in \bt_R) n(\bt_L) }
    {\sqrt{w(\bt) n(\bt_L) n(\bt_R)} } \Bigg\}.
\end{align*}
We refer the reader to Appendix \ref{sec:proofs} for
an orthonormal decomposition of the tree output that holds
in a much more general setting
(i.e., when the node output is the least squares
projection onto the linear span of a finite dictionary).

\begin{lemma}
~\label{lmm:hilbert}
  If $ T $ is a decision tree constructed with CART methodology
  (either axis-aligned or oblique),
  then its output~\eqref{eq:output} admits the orthogonal expansion
  \begin{equation}
    \label{eq:orth}
    \muhat(T)(\bx)
    =
    \sum_{\bt \in [T]}
    \langle \by , \bpsi_{\bt} \rangle_n \psi_{\bt}(\bx),
  \end{equation}
  where $\bpsi_{\bt}= (\psi_{\bt}(\bx_1), \dots, \psi_{\bt}(\bx_n))^{\Trans} $.
  By construction,
  $ \|\bpsi_{\bt}\|_n= 1 $
  and $ \langle \bpsi_{\bt}, \bpsi_{\bt^{\prime}} \rangle_n = 0 $
  for distinct internal nodes $ \bt $ and $ \bt^{\prime} $ in $ [T] $.
  In other words, $ \bmuhat(T) $
  is the empirical orthogonal projection of $\by$ onto the linear span of
  $ \{\bpsi_{\bt}\}_{\bt \in [T]} $.
  Furthermore,
  \begin{equation}
    \label{eq:opt}
    |\langle \by, \bpsi_{\bt} \rangle_n|^2 =
    \widehat{\Delta}(\hat b, \hat \ba, \bt).
  \end{equation}
\end{lemma}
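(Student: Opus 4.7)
The plan is to establish four things: (i) $\{\psi_{\bt}\}_{\bt\in [T]}$ (with $\psi_\emptyset\equiv 1$) is an orthonormal system in the empirical inner product, (ii) its linear span coincides with the linear span of the terminal node indicators, (iii) consequently the orthonormal expansion equals $\widehat\mu(T)$, and (iv) a direct calculation identifies $|\langle y,\psi_\bt\rangle_n|^2$ with $\widehat\Delta(\hat b,\hat\ba,\bt)$.

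For step (i), unit norm follows from a one-line computation: $\|\psi_\bt\|_n^2 = \frac{1}{n}\bigl[n(\bt_L)\tfrac{n(\bt_R)^2}{w(\bt)n(\bt_L)n(\bt_R)} + n(\bt_R)\tfrac{n(\bt_L)^2}{w(\bt)n(\bt_L)n(\bt_R)}\bigr] = \frac{n(\bt)}{n\,w(\bt)}=1$. For orthogonality of $\psi_\bt$ and $\psi_{\bt'}$ with $\bt\neq \bt'$ in $[T]$, either the two nodes are disjoint (supports disjoint, so the empirical inner product vanishes), or, without loss of generality, $\bt'\subset \bt_L$ or $\bt'\subset \bt_R$. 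In that case $\psi_\bt$ is constant on $\bt'$, so $\langle\psi_\bt,\psi_{\bt'}\rangle_n$ equals that constant times $\frac{1}{n}\sum_{\bx_i\in\bt'}\psi_{\bt'}(\bx_i)$, and the latter sum telescopes to zero by the very definition of $\psi_{\bt'}$. Orthogonality to the constant $\psi_\emptyset\equiv 1$ is the same telescoping. This is the step I expect to take the most care, because it requires invoking the tree structure (ancestor/descendant versus disjoint) rather than pure algebra.

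For step (ii), I would induct on the number of internal splits. At depth zero the two spans are both $\reals\cdot\mathbf 1$. Inductively, suppose the claim holds for a tree $T$. When an internal node $\bt$ (previously terminal) is split into $\bt_L,\bt_R$, the terminal-indicator basis gains $\Indicator(\cdot\in\bt_L)$ and $\Indicator(\cdot\in\bt_R)$ while losing $\Indicator(\cdot\in\bt)$; since $\Indicator(\cdot\in\bt)=\Indicator(\cdot\in\bt_L)+\Indicator(\cdot\in\bt_R)$, the span grows by exactly one dimension. That new dimension is precisely the orthogonal complement of $\Indicator(\cdot\in\bt)$ in $\mathrm{span}\{\Indicator(\cdot\in\bt),\Indicator(\cdot\in\bt_L)\}$, which is (up to scaling) $\psi_\bt$, as exhibited by the Gram–Schmidt computation already displayed in the excerpt. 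Hence the orthonormal collection grows in lockstep with the terminal basis. Step (iii) is then immediate: the canonical tree output~\eqref{eq:output} is the empirical orthogonal projection of $\by$ onto the span of terminal-node indicators (piecewise constants on the partition), which by (ii) equals the span of $\{\psi_\bt\}_{\bt\in[T]}$; by (i) the projection is given by the orthonormal formula~\eqref{eq:orth}.

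For step (iv), compute
\begin{align*}
\langle y,\psi_\bt\rangle_n
= \frac{1}{n\sqrt{w(\bt)n(\bt_L)n(\bt_R)}}\bigl[n(\bt_R)\textstyle\sum_{\bx_i\in\bt_L}y_i - n(\bt_L)\sum_{\bx_i\in\bt_R}y_i\bigr]
= \sqrt{\tfrac{n(\bt_L)n(\bt_R)}{n\,n(\bt)}}\,(\overline y_{\bt_L}-\overline y_{\bt_R}),
\end{align*}
using $w(\bt)=n(\bt)/n$. On the other side, the standard ANOVA identity
\begin{equation*}
\textstyle\sum_{\bx_i\in\bt}(y_i-\overline y_\bt)^2 = \sum_{\bx_i\in\bt_L}(y_i-\overline y_{\bt_L})^2+\sum_{\bx_i\in\bt_R}(y_i-\overline y_{\bt_R})^2 + n(\bt_L)(\overline y_{\bt_L}-\overline y_\bt)^2+n(\bt_R)(\overline y_{\bt_R}-\overline y_\bt)^2,
\end{equation*}
combined with $\overline y_{\bt_L}-\overline y_\bt=\tfrac{n(\bt_R)}{n(\bt)}(\overline y_{\bt_L}-\overline y_{\bt_R})$ and the symmetric identity on the right, collapses $\widehat\Delta(\hat b,\hat\ba,\bt)$ to $\tfrac{n(\bt_L)n(\bt_R)}{n\,n(\bt)}(\overline y_{\bt_L}-\overline y_{\bt_R})^2$, matching $|\langle y,\psi_\bt\rangle_n|^2$. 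Since CART selects $(\hat b,\hat\ba)$ maximizing $\widehat\Delta$, this justifies the equality as written. The main obstacle, as noted, is really only the ancestor/descendant case in the orthogonality argument; everything else is bookkeeping and standard decompositions.
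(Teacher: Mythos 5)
Your proof is correct for the constant-output case stated in the lemma, and it reaches the same conclusions as the paper's proof but via somewhat different bookkeeping. The paper's proof works immediately in the more general setting where the node output $\hat y_{\bt}$ is a least-squares projection onto $\mathrm{span}(\calH)$ restricted to $\bt$, for an arbitrary finite dictionary $\calH$ containing the constant: it decomposes the post-split projection space as the pre-split space plus its orthogonal complement, shows that the one-step increment $\hat y_{\bt_L}\Indicator(\cdot\in\bt_L)+\hat y_{\bt_R}\Indicator(\cdot\in\bt_R)-\hat y_{\bt}\Indicator(\cdot\in\bt)$ is exactly the projection onto that complement, and then telescopes this increment along each root-to-leaf path to produce~\eqref{eq:orth}. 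You instead compare spans directly in the constant case: you prove by induction on the number of splits that $\mathrm{span}\{\psi_{\bt^{\prime}}:\bt^{\prime}\in[T]\}$ equals the span of the terminal-node indicators, and then observe that~\eqref{eq:output} is the empirical projection onto the latter. For~\eqref{eq:opt}, the paper subtracts two Pythagorean identities (residual sum of squares before and after the split, expanded in the orthonormal bases), which again is dictionary-agnostic; you specialize and compute both sides explicitly to $\tfrac{n(\bt_L)n(\bt_R)}{n\,n(\bt)}(\overline y_{\bt_L}-\overline y_{\bt_R})^2$ via the ANOVA decomposition. Your span-induction and ANOVA computations are more elementary and arguably more transparent in the constant-output setting; the paper's route via nested-subspace decomposition and path telescoping is the one that carries over without change to richer node outputs such as local polynomials or splines. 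The orthogonality argument (disjoint versus nested nodes, with $\psi_{\bt}$ constant on a strict descendant $\bt^{\prime}$ so that $\langle\psi_{\bt},\psi_{\bt^{\prime}}\rangle_n$ reduces to a multiple of $\sum_{\bx_i\in\bt^{\prime}}\psi_{\bt^{\prime}}(\bx_i)=0$) is essentially the same in both treatments.
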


\begin{remark}[Connection to Sieve Estimation Literature]
Another way of thinking about CART is through the lens of least squares sieve estimation.
For example, for a fixed but otherwise arbitrary ordering of the internal nodes
of $T$,
suppose $\mathbf{\Psi}$ is the $ n \times |[T]|$ data matrix
$ [\psi_{\bt}(\bx_i)]_{1 \leq i \leq n, \; \bt\in[T]} $
and $\mathbf{\Psi}(\bx)$ is the
$ |[T]| \times 1 $ feature vector $ (\psi_{\bt}(\bx))_{\bt\in[T]}$.
Then,
\begin{equation*}
\muhat(T)(\bx)
  = \mathbf{\Psi}(\bx)^{\Trans}
  (\mathbf{\Psi}^{\Trans}\mathbf{\Psi})^{-1}
  \mathbf{\Psi}^{\Trans}\by
  = \mathbf{\Psi}(\bx) ^{\Trans}\mathbf{\Psi}^{\Trans}\by.
\end{equation*}
From this perspective, standard sieve estimation and
inference theory \citep{huang2003local,cattaneo2020large} cannot be applied to
studying the statistical properties of $\muhat(T)(\bx)$
because the implied (random) basis functions depend on the entire sample
$(\by,\bX)$ through the adaptive (recursive) split regions underlying the
decision tree construction (i.e., the induced random partitioning).
\end{remark}

Lemma~\ref{lmm:hilbert} suggests that there may be some connections between
oblique CART and sequential greedy optimization in Hilbert spaces.
Indeed, our analysis of the
oblique CART algorithm suggests that it can be viewed as a local orthogonal
greedy procedure in which one iteratively
projects the data onto the space of all constant predictors within a greedily
obtained node.
The algorithm also has similarities to forward-stepwise
regression because,
at each current node $ \bt $, it grows the tree by selecting a feature,
$ \bpsi_{\bt} $, most correlated with the residuals,
$(y_i - \overline y_{\bt})\Indicator(\bx_i \in \bt) $,
per~\eqref{eq:opt} and~\eqref{eq:sse},
and then adding that chosen feature along with its coefficient back to the tree
output in~\eqref{eq:orth}.

The proofs show that this local greedy approach has a very
similar structure to standard global greedy algorithms in Hilbert spaces.
Indeed, the reader familiar with greedy algorithms in Hilbert spaces for
over-complete dictionaries will recognize some similarities in the analysis
(see the \emph{orthogonal greedy algorithm} \citep{barron2008approximation}
in which one iteratively projects the data onto the linear span of a
finite collection of greedily obtained dictionary elements).
As with all orthogonal expansions, the decomposition of
$ \bmuhat(T_K) $ in Lemma~\ref{lmm:hilbert}
allows one to write down a recursive expression for the training error.
That is, from
$
\bmuhat(T_K)
= \bmuhat(T_{K-1})
+ \sum_{\bt \in T_{K-1}} \langle \by , \bpsi_{\bt} \rangle_n \bpsi_{\bt}$,
one obtains the
identity
\begin{equation}
  \label{eq:training_recursion}
  \| \by-\bmuhat(T_K)\|^2_n
  =
  \| \by-\bmuhat(T_{K-1})\|^2_n
  -
  \sum_{\bt \in T_{K-1}}|\langle \by, \bpsi_{\bt} \rangle_n|^2.
\end{equation}
Furthermore, using the fact that
$ \langle \by, \bpsi_{\bt} \rangle_n $ is the
result of a local maximization, viz.,
the equivalence~\eqref{eq:opt} in Lemma~\ref{lmm:hilbert},
one can construct an empirical probability measure
$ \Pi $ on $(b, \ba) $ and lower bound
$ |\langle \by, \bpsi_{\bt} \rangle_n|^2 $
by $ \int \widehat{\Delta}(b, \ba, \bt) \diff\Pi(b, \ba) $,
which is itself further lower bounded by an
appropriately scaled squared node-wise excess training error.
These inequalities
(formalized and proven in Appendix~\ref{sec:techlem})
can be combined with~\eqref{eq:training_recursion} to
provide a useful training error bound.
We formally present this result next.

\subsection{Training Error Bound for Oblique CART}
\label{sec:trainerr}
Applying the techniques outlined earlier, we
can show the following result (Lemma~\ref{lmm:training})
on the training error of the tree.
Our result provides an algorithmic guarantee, namely,
that the expected excess training error of a
depth $ K $ tree constructed with oblique CART methodology decays like $ 1/K $,
and, with additional assumptions (see Section~\ref{sec:fast}),
like $ 4^{-K/q} $ for some $ q > 2$.
To the best of our knowledge, this result is the first of its kind for oblique
CART.\ The math behind it is surprisingly simple; in particular, unlike past work
on axis-aligned decision trees, there is no need to directly analyze the
partition that is induced by recursively splitting, which often entails showing
that certain local (i.e., node-specific) empirical quantities concentrate around
their population level versions
\citep{scornet2015consistency, wager2018estimation,
  syrgkanis2020estimation, chi2020asymptotic}.

For the following statements, the output of a depth $ K $ tree $ T_K $
constructed with oblique CART methodology using the search spaces
$ \{\A_{\bt}: \bt \in [T] \}$ is denoted $ \bmuhat( T_K) $.
Throughout the paper, we use $ \E $ to denote the expectation
with respect to the joint distribution of the (possibly random) search spaces
$ \{ \A_{\bt} : \bt \in [T_K]\} $ and the data.

\begin{lemma}[Training error bound for oblique CART]
~\label{lmm:training}
  Let $\E[y^2\log(1+|y|)]<\infty$
  and
  $ g \in \F $
  with $ \|g\|_{\L_1} < \infty $.
  Then, for any $ K \geq 1$,
  \begin{equation}
    \label{eq:trainlem}
    \E\big[\|  \by - \bmuhat( T_K) \|^2_n\big]
    \leq
    \E\big[\|  \by - \bg \|^2_n\big]
    + \frac{\|g\|^2_{\L_1}\E\big[\max_{\bt\in [T_K]}
      P^{-1}_{\A_{\bt}}(\kappa)\big]}
    {\kappa K}.
  \end{equation}
\end{lemma}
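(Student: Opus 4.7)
My strategy is to view oblique CART as a local variant of the orthogonal greedy algorithm in a Hilbert space and mimic the classical convergence analysis of such algorithms. Combining the recursion~\eqref{eq:training_recursion} with the optimality identity~\eqref{eq:opt} from Lemma~\ref{lmm:hilbert} gives the deterministic relation (conditional on the data and search spaces)
\[
\|y-\widehat{\mu}(T_K)\|_n^2 = \|y-\widehat{\mu}(T_{K-1})\|_n^2 - \sum_{\bt\in T_{K-1}}\widehat\Delta(\hat b_{\bt},\hat\ba_{\bt},\bt).
\]
Writing $r_{K-1}=y-\widehat{\mu}(T_{K-1})$, $\widehat\Delta^*(\bt)=\max_{(b,\ba)\in\R^{1+p}}\widehat\Delta(b,\ba,\bt)$, $Q=\max_{\bt\in[T_K]}P_{\A_{\bt}}^{-1}(\kappa)$, and $\tilde e_K=\E[\|y-\widehat{\mu}(T_K)\|_n^2]-\E[\|y-g\|_n^2]$, the goal is to lower bound the per-depth expected reduction by $\kappa\tilde e_{K-1}^2/(\|g\|_{\L_1}^2\E[Q])$, so that the standard OGA-style induction on the recursion $\tilde e_K\le \tilde e_{K-1}-\tilde e_{K-1}^2/C$ with $C=\|g\|_{\L_1}^2\E[Q]/\kappa$ yields $\tilde e_K\le C/K$.

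The geometric heart of the argument is a local decomposition within each terminal node of $T_{K-1}$. For any $\epsilon>0$, the definition of $\|\cdot\|_{\L_1}$ supplies a representative $g'=\sum_k g_k(\ba_k^\Trans\bx)$ with $\|g-g'\|\le \epsilon$ and $\sum_k V(g_k,\ba_k,\X)\le \|g\|_{\L_1}+\epsilon$. Each BV ridge admits a Riemann--Stieltjes representation as a signed superposition of step functions $\bx\mapsto \Indicator(\ba_k^\Trans\bx>u)$ with total mass $V(g_k,\ba_k,\bt)$ on $I(\ba_k,\bt)$. The same Gram--Schmidt computation that produces~\eqref{eq:stumps_norm} shows that $\Indicator(\ba_k^\Trans\bx>u)\Indicator(\bx\in\bt)$ is a multiple of $\Indicator(\bx\in\bt)$ plus a coefficient $\beta\le \sqrt{w(\bt)}/2$ times the orthonormal stump $\psi_{u,\ba_k,\bt}$, and the same argument extends~\eqref{eq:opt} to $|\langle y,\psi_{b,\ba,\bt}\rangle_n|^2=\widehat\Delta(b,\ba,\bt)$ for every $(b,\ba)$. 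Since $r_{K-1}$ is empirically mean-zero on every terminal node and hence annihilates $\Indicator(\bx\in\bt)$, this yields
\[
|\langle r_{K-1},g'\Indicator(\bx\in\bt)\rangle_n| \le \frac{\sqrt{w(\bt)}}{2}\Bigl(\sum_k V(g_k,\ba_k,\bt)\Bigr)\sqrt{\widehat\Delta^*(\bt)}.
\]
Summing over $\bt\in T_{K-1}$, applying Cauchy--Schwarz with $\sum_{\bt}w(\bt)=1$ and $V(g_k,\ba_k,\bt)\le V(g_k,\ba_k,\X)$, and letting $\epsilon\downarrow 0$ gives $|\langle r_{K-1},g\rangle_n|^2\le \tfrac14\|g\|_{\L_1}^2\sum_{\bt}\widehat\Delta^*(\bt)$. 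Combined with the orthogonal-projection identity $2\langle r_{K-1},g\rangle_n=(\|r_{K-1}\|_n^2-\|y-g\|_n^2)+\|g-\widehat{\mu}(T_{K-1})\|_n^2$ (which uses $\langle r_{K-1},\widehat{\mu}(T_{K-1})\rangle_n=0$ from Lemma~\ref{lmm:hilbert}), this produces the pointwise bound $\sum_{\bt}\widehat\Delta^*(\bt)\ge (\|r_{K-1}\|_n^2-\|y-g\|_n^2)_+^2/\|g\|_{\L_1}^2$.

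To convert $\widehat\Delta^*(\bt)$ into the achieved reduction, the definition of $P_{\A_{\bt}}(\kappa)$ immediately yields $\E_{\A_{\bt}}[\widehat\Delta(\hat b_{\bt},\hat\ba_{\bt},\bt)]\ge \kappa P_{\A_{\bt}}(\kappa)\widehat\Delta^*(\bt)\ge (\kappa/Q)\widehat\Delta^*(\bt)$. Taking the full expectation and applying the Cauchy--Schwarz inequality $\E[A^2/Q]\ge (\E A)^2/\E Q$ with $A=(\|r_{K-1}\|_n^2-\|y-g\|_n^2)_+$ (so that $\E A\ge \tilde e_{K-1}$ by convexity of $(\cdot)_+$) produces the desired recursion; the case $\tilde e_{K-1}\le 0$ is handled trivially since then $\tilde e_K\le \tilde e_{K-1}\le 0\le C/K$. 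The two steps I expect to require the most care are: (i) the passage $\epsilon\downarrow 0$, which requires $\E[\|y-g'\|_n^2]\to\E[\|y-g\|_n^2]$ and is where the moment hypothesis $\E[y^2\log(1+|y|)]<\infty$ is likely to enter (to dominate the cross terms uniformly in the approximating sequence); and (ii) ensuring that $Q$, which depends on the entire random tree $[T_K]$ including splits performed after step $K-1$, can be cleanly brought outside the expectation as $\E[Q]$ rather than a conditional version.
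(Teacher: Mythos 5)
Your proposal is correct and arrives at the stated bound, but it travels a genuinely different path from the paper's. Both start from the orthogonal decomposition in Lemma~\ref{lmm:hilbert}, the recursion $R_K = R_{K-1} - \sum_{\bt\in T_{K-1}}\widehat\Delta(\hat b_{\bt},\hat\ba_{\bt},\bt)$, and the Riemann--Stieltjes representation of each $g_k$ as a superposition of step functions $\Indicator(\ba_k^\Trans\bx > u)$; and both finish with Sedrakyan's inequality in the expectation and the standard OGA recursion (Lemma~\ref{lem:rec}). Where you differ is the middle step. The paper works locally: it introduces the node-wise excess error $R_{K-1}(\bt) = \|y-\hat y_{\bt}\|^2_{\bt} - \|y-g\|^2_{\bt}$ and proves the per-node Impurity Bound (Lemma~\ref{lem:impurity_bound}), $\max_{(b,\ba)}\widehat\Delta(b,\ba,\bt) \geq w(\bt)R_{K-1}^2(\bt)/\|g\|^2_{\L_1(\bt)}$, then aggregates these with Sedrakyan. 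You instead work globally: you bound the residual--target correlation $|\langle r_{K-1}, g'\rangle_n|$ by $\tfrac12\|g\|_{\L_1}\sqrt{\sum_{\bt}\widehat\Delta^*(\bt)}$ via the Gram--Schmidt coefficient $\beta\le\sqrt{w(\bt)}/2$, and combine it with the projection identity $2\langle r_{K-1},g\rangle_n = R_{K-1} + \|g-\widehat\mu(T_{K-1})\|^2_n$, which is exactly the classical OGA move. Your route is arguably more direct and more transparently an instance of orthogonal-greedy analysis; the paper's route is more modular, and the extra information in $\|g\|_{\L_1(\bt)}$ (rather than the global $\|g\|_{\L_1}$) is precisely what Lemma~\ref{lmm:fasttraining} needs under Assumptions~\ref{as:tvbound} and~\ref{as:node} — your global argument collapses that local structure and would not by itself give the fast rates.

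Two small reassurances on the points you flagged as delicate. (i) The passage $\epsilon\downarrow 0$ only needs $y-g\in\mathscr{L}_2$, which already follows from $\E[y^2\log(1+|y|)]<\infty$ (hence $\E[y^2]<\infty$) and $g\in\mathscr{L}_2(\Prob_{\bx})$; the stronger $\log$-moment is used elsewhere (Doob's maximal inequality, to guarantee $\E[\sup_{\bx}|\widehat\mu(T)(\bx)|^2]<\infty$ for the general least-squares node output), not to control your cross terms. The paper handles this by the same \emph{without loss of generality} reduction to $g\in\G$ that you use. (ii) Your concern about $Q$ depending on $[T_K]$ is not a gap: for any $k\leq K$, $T_{k-1}\subseteq[T_k]\subseteq[T_K]$, so $\max_{\bt\in T_{k-1}}P^{-1}_{\A_{\bt}}(\kappa) \leq Q$ pointwise, and Sedrakyan's inequality applies to the random pair $(A,Q')$ with $Q'=\max_{\bt\in T_{K-1}}P^{-1}_{\A_{\bt}}(\kappa)$ measurable with respect to the data and $T_{K-1}$, after which $\E[Q']\leq\E[Q]$ loosens cleanly to the stated constant. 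The paper uses the identical containment to uniformize its $b_k$'s across $k\leq K$.
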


For this result to be non-vacuous, the only additional assumption needed is that
the largest of the reciprocal probabilities,
$ P^{-1}_{\A_{\bt}}(\kappa) $, are integrable with respect to the data and
(possibly random) search spaces. A simple sufficient condition is that the
splitting probabilities are almost surely bounded away from zero, which we
record in the following assumption for future reference.
\begin{assumption}[Non-zero splitting probabilities]
  \label{as:unifprob}
  The splitting probabilities are uniformly bounded away from zero.
  That is,
  $$
    \inf_{n \geq 1}\inf_{\bt\in[T_{\text{max}}]}P_{\A_{\bt}}(\kappa) > 0 \quad a.s.
  $$
\end{assumption}

Section \ref{sec:optim framework} discusses optimization algorithms/approaches
that would satisfy Assumption \ref{as:unifprob}, and, more generally, that would guarantee
$\E\big[\max_{\bt\in [T_K]}P^{-1}_{\A_{\bt}}(\kappa)\big] < \infty$.

\subsection{Pruning}
Without proper tuning of the depth $ K $, the tree  $T_K$ can very easily become
overly complicated, causing its output $\muhat(T_K)(\bx)$ to generalize poorly
to unseen data. While one could certainly select good choices of $K $ via a
holdout method, in practice,
complexity modulation is often achieved through pruning.
We first introduce some additional concepts, and then go on to describe
such a procedure.

We say that $T$ is a pruned subtree of $T^{\prime}$,
written as $ T \preceq T^{\prime}$,
if $T$ can be obtained from $T^{\prime}$ by iteratively
merging any number of its internal nodes.
A pruned subtree of $T_{\text{max}} $ is defined as
any binary subtree of $T_{\text{max}} $ having the same root node as
$T_{\text{max}} $.
Recall that the number of terminal nodes in a tree $T$ is denoted $|T|$.
As shown in \citet[Section 10.2]{breiman1984cart},
the smallest minimizing subtree for the penalty coefficient
$\lambda = \lambda_n \geq 0 $,
\begin{equation}
  \label{eq:penalized}
  T_{\text{opt}}
  \in
  \argmin_{T \preceq T_{\text{max}}}
  \Big\{ \|\by - \bmuhat( T)\|^2_n + \lambda |T| \Big\},
\end{equation}
exists and is unique
(smallest in the sense that if $T_{\text{opt}}$
optimizes the penalized risk of~\eqref{eq:penalized},
then $ T_{\text{opt}} \preceq T $ for every pruned subtree $T$ of
$T_{\text{max}}$).
For a fixed $\lambda$, the optimal subtree $T_{\text{opt}}$ can be
found efficiently by weakest link pruning, i.e., by
successively collapsing the internal node that decreases
$\|\by - \bmuhat( T)\|^2_n$ the most, until we arrive at the
single-node tree consisting of the root node.
This method enumerates a finite list of trees for which the objective function
can then be evaluated to find the optimal subtree.
Good values of $\lambda$ can be selected using cross-validation on a holdout
subset of data, for example.
See \citet{mingers1989empirical} for a description of various pruning algorithms.

We now present our main consistency and convergence rate results for both pruned and un-pruned
oblique trees.

\subsection{Oracle Inequality for Oblique CART}
Our main result establishes an adaptive prediction risk bound
(also known as an \emph{oracle inequality})
for oblique CART under model misspecification; that
is, when the true model may not belong to $\F $. Essentially, the result shows
that oblique CART performs almost as if it was finding the best approximation of
the true model with ridge expansions, while accounting for the goodness-of-fit
and descriptive complexity relative to sample size.
To bound the integrated mean squared error (IMSE),
the training error bound from Lemma~\ref{lmm:training} is
coupled with tools from empirical process theory
\citep{gyorfi2002distribution} for studying partition-based estimators.

Our results rely on the following assumption regarding
the data generating process.
\begin{assumption}[Exponential tails of the conditional response variable]
~\label{as:dgp}
  The conditional distribution of $y$ given $ \bx $
  has exponentially decaying tails.
  That is, there exist positive constants $c_1$, $c_2$, $\gamma$, and $M$,
  such that for all $ \bx \in \X $,
  \begin{equation*}
    \Prob(|y|>B+M \mid \bx) \leq c_1\exp(-c_2B^\gamma), \quad B \geq 0.
  \end{equation*}
\end{assumption}

In particular, note that $ \gamma = 1 $ for sub-Exponential data,
$ \gamma = 2 $ for sub-Gaussian data, and $ \gamma = \infty $ for bounded data.
Using the layer cake representation for expectations, i.e.,
$ |\mu(\bx)| \leq \E[|y| \mid \bx] = \int_{0}^{\infty}\Prob(|y| \geq z \mid \bx)dz $,
Assumption~\ref{as:dgp} implies that the conditional mean is uniformly bounded:
\begin{equation}
\label{eq:uniform}
  \sup_{\bx\in\X}|\mu(\bx)|
  \leq
  M + c_1\textstyle\int_0^{\infty}\exp(-c_2z^\gamma)\diff z
  = M^{\prime}
  < \infty.
\end{equation}

\begin{theorem}[Oracle inequality for oblique trees]
~\label{thm:oracle}
Let Assumption~\ref{as:dgp} hold.
  Then, for any $ K \geq 1 $,
 \begin{equation}
 \label{eq:oracle_unprune}
 \begin{aligned}
 & \E\big[\|\mu - \muhat(T_K)\|^2\big] \\
 & \qquad
    \leq 2 \inf_{f\in\F} \Bigg\{
   \|\mu - f\|^2 +
    \frac{\|f\|_{\L_1}^2\E\big[\max_{\bt\in [T_K]} P^{-1}_{\A_{\bt}}(\kappa)\big]}
    {\kappa K} +
    C\frac{2^{K}d \log(np/d)\log^{4/\gamma}(n)}{n} \Bigg\},
    \end{aligned}
  \end{equation}
  where $ C = C(c_1,c_2,\gamma, M) $ is a positive constant.
  Furthermore, if the penalty coefficient satisfies
  $ \lambda_n \gtrsim (d/n)\log(np/d)\log^{4/\gamma}(n) $,
  then
  \begin{equation}
    \begin{aligned}
      \label{eq:oracle_prune}
      & \E\big[\|\mu - \muhat( T_{\text{opt}})\|^2\big]
      \\
      & \qquad
        \leq 2 \inf_{K \geq 1, \; f\in\F} \Bigg\{
        \|\mu - f\|^2 +
        \frac{\|f\|_{\L_1}^2
        \E\big[\max_{\bt\in [T_K]} P^{-1}_{\A_{\bt}}(\kappa)\big]}{\kappa K}
        +
        C\frac{2^K d\log(np/d)\log^{4/\gamma}(n)}{n} \Bigg\}.
    \end{aligned}
  \end{equation}
\end{theorem}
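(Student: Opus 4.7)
The plan is to bridge the training-error bound of Lemma~\ref{lmm:training} with empirical-process machinery for partition-based estimators (in the spirit of \citet{gyorfi2002distribution}) that converts a bound on $\|y-\widehat\mu\|_n^2$ into one on $\|\mu-\widehat\mu\|^2$. Note that the hypothesis $\E[y^2\log(1+|y|)]<\infty$ required by Lemma~\ref{lmm:training} is immediate from Assumption~\ref{as:dgp}.

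For~\eqref{eq:oracle_unprune}, I first observe that $\widehat{\mu}(T_K)$, though itself data-dependent, lies in the deterministic class of piecewise-constant functions on partitions induced by at most $2^K-1$ oblique splits of sparsity at most $d$. Each split is parameterized by $(b,\ba)$ with $\ba$ supported on at most $d$ of $p$ coordinates, giving per-split VC complexity $O(d\log(p/d))$; recursing over the $O(2^K)$ internal nodes yields a total VC dimension $V_n \lesssim 2^K d\log(np/d)$. A standard uniform concentration argument for squared loss (e.g., Theorem~11.4 of \citet{gyorfi2002distribution}), adapted to exponential tails via a $\log^{1/\gamma}(n)$ truncation of $y$ and a Bernstein-type inequality for the truncated process, then produces
$$\E\big[\|\mu-\widehat{\mu}(T_K)\|^2\big] \;\le\; 2\,\E\big[\|y-\widehat{\mu}(T_K)\|_n^2 - \|y-\mu\|_n^2\big] \;+\; C\,\frac{2^K d\log(np/d)\log^{4/\gamma}(n)}{n}.$$
To control the empirical discrepancy on the right, note that for any $g\in\F$ with $\|g\|_{\L_1}<\infty$, $\E[\|y-g\|_n^2-\|y-\mu\|_n^2]=\|g-\mu\|^2$, so Lemma~\ref{lmm:training} yields
$$\E\big[\|y-\widehat{\mu}(T_K)\|_n^2 - \|y-\mu\|_n^2\big] \;\le\; \|g-\mu\|^2 + \frac{\|g\|_{\L_1}^2\,\E\big[\max_{\bt\in[T_K]}P^{-1}_{\A_{\bt}}(\kappa)\big]}{\kappa K}.$$
Combining these two displays and taking the infimum over $g\in\F$ delivers~\eqref{eq:oracle_unprune}.

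For~\eqref{eq:oracle_prune} I invoke a Barron-Birge-Massart-style penalized model-selection inequality applied to the nested family of depth-$K$ oblique tree classes, with $\sigma_K^2 \propto (d/n)\,2^K\log(np/d)\log^{4/\gamma}(n)$. Since $T_{\text{opt}}$ minimizes $\|y-\widehat{\mu}(T)\|_n^2+\lambda_n|T|$ over pruned subtrees of $T_{\max}$, and every depth-$K$ tree has $|T|\le 2^K$, the assumption $\lambda_n \gtrsim (d/n)\log(np/d)\log^{4/\gamma}(n)$ forces $\lambda_n|T_K|$ to dominate the complexity term from the un-pruned analysis. A standard penalized-risk oracle inequality, together with a Kraft-style weighting across the countable family of depths, then yields for every $K\ge 1$ the bound
$$\E\big[\|\mu-\widehat{\mu}(T_{\text{opt}})\|^2\big] \;\le\; 2\,\E\big[\|y-\widehat{\mu}(T_K)\|_n^2 - \|y-\mu\|_n^2\big] \;+\; C\,\frac{2^K d\log(np/d)\log^{4/\gamma}(n)}{n};$$
inserting the Lemma~\ref{lmm:training} bound and minimizing jointly over $K\ge 1$ and $g\in\F$ gives~\eqref{eq:oracle_prune}.

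The principal obstacle is the combinatorial VC-dimension accounting: extracting the sharp factor $d\log(np/d)$ (rather than the loose $dp$) requires care, since each hyperplane has support chosen from $\binom{p}{d}$ subsets and $\binom{p}{d}\le (ep/d)^d$ enters through a union bound. The second difficulty is handling exponential rather than bounded tails; the extra $\log^{4/\gamma}(n)$ appears by truncating $y$ at level $\log^{1/\gamma}(n)$, bounding the truncated squared-loss process by Bernstein's inequality (with variance proxy of order $\log^{2/\gamma}(n)$) via symmetrization and chaining, and checking that the untruncated remainder is of lower order. Once the single-depth oracle and the VC estimate are in place, the pruning step is a routine application of Kraft-penalized model selection.
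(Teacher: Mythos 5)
Your proposal follows essentially the same route as the paper's proof: combine the training-error bound of Lemma~\ref{lmm:training} with an empirical-process concentration step (Theorem~11.4 of \citet{gyorfi2002distribution}), control the complexity term via a partitioning-number/covering-number estimate of order $2^K d\log(np/d)$, and handle the pruned tree by a union bound over depths. Your observation that $\E[y^2\log(1+|y|)]<\infty$ follows from Assumption~\ref{as:dgp}, and your decomposition $\E[\|y-g\|^2_n - \|y-\mu\|^2_n] = \|g-\mu\|^2$ followed by an application of Lemma~\ref{lmm:training}, are exactly the steps the paper uses.

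A few points where your description differs technically from the paper, without changing the outcome. First, the paper does not literally truncate $y$ and apply Bernstein's inequality with symmetrization; instead it conditions on the two events $\{\forall i : |y_i| \leq B_n\}$ and $\{\exists i : |y_i| > B_n\}$, applies the Győrfi-style concentration on the first (where the estimator is uniformly bounded by $QB_n$), and bounds the second by Cauchy--Schwarz combined with the exponential tail from Assumption~\ref{as:dgp}; the value $B_n \asymp \log^{1/\gamma}(n)$ is what produces the extra $\log^{4/\gamma}(n)$ factor, so the flavor of your argument is right even though the technical mechanism is slightly different. Second, the paper's complexity count is not quite a VC-dimension bound but a partitioning-number bound $\Gamma_{n,k} \leq (enp/d)^{d2^k}$ (in the sense of \citet{nobel1996histogram}), later combined with a covering-number estimate for the node-level function dictionary; the resulting $\log$ factors match what you state. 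Third, for pruning, the paper does not use a Kraft-style weight sequence; instead it exploits the optimality of $T_{\text{opt}}$ against every fixed-depth $T_k$, chooses $\lambda_n$ so that $\lambda_n 2^{k+1}$ dominates the depth-$k$ concentration threshold $\alpha(n,k)+\beta(n)$, and takes a crude union bound over $1 \le k \le n-1$ with each term bounded by $n^{-2}$. Your "Kraft-weighted model selection" phrasing points at a slightly heavier mechanism than what is actually needed, but it would also work; the paper's version is simpler. None of these deviations constitutes a gap.
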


Consistency of oblique trees follows from Theorem~\ref{thm:oracle}
under the additional assumption that the splitting probabilities are bounded
away from zero (Assumption~\ref{as:unifprob}) and that the depth $K$ grows
appropriately with the sample size.

\begin{corollary}[Consistency for fixed dimension]
  \label{cor:fixed_d}
  Let Assumptions~\ref{as:unifprob} and~\ref{as:dgp} hold. If $K\asymp \log n$, then
  \begin{equation*}
    \lim_{n \to \infty}\E\big[
    \|\mu - \muhat(T_{K})\|^2
    \big]
    = 0,
  \end{equation*}
  and if the penalty coefficient satisfies
  $ \lambda_n \gtrsim (d/n)\log(np/d)\log^{4/\gamma}(n) $,
  then
  \begin{equation*}
    \lim_{n \to \infty}\E\big[
    \|\mu - \muhat(T_{\text{opt}})\|^2
    \big]
    = 0.
  \end{equation*}
\end{corollary}

While Corollary~\ref{cor:fixed_d} shows that oblique trees are consistent for
fixed dimension $p$, it does not provide a rate of convergence.
Under a few additional assumptions, however,
Theorem~\ref{thm:oracle} implies that the oblique tree is consistent
with a logarithmic rate of convergence
even when the dimension grows with the sample size.

\begin{corollary}[Consistency for possibly growing dimension]
  \label{cor:oracle_consistency}
  Let Assumptions~\ref{as:unifprob} and~\ref{as:dgp} hold
  and suppose $ \{\bmu_n\} $
  is a sequence of regression functions that belong to $ \F $ with
  $ \sup_n \|\bmu_n\|_{\L_1} < \infty $.
  Assume furthermore that
  $ d = p = O(n^{1-\xi}) $ for some $ \xi \in (0, 1) $.
  If $K \asymp \log n$, then
  \begin{equation*}
    \E\big[\|
    \mu_n - \muhat(T_{K})\|^2
    \big]
    = O\big((\log n)^{-1}\big),
  \end{equation*}
  and if the penalty coefficient satisfies
  $ \lambda_n \gtrsim (d/n)\log(np/d)\log^{4/\gamma}(n) $,
  then
  \begin{equation*}
    \E\big[
    \|\mu_n - \muhat(T_{\text{opt}})\|^2
    \big]
  = O\big((\log n)^{-1}\big).
  \end{equation*}
The results also hold trivially if $d$ and $p$ are fixed.
\end{corollary}

\begin{remark}[Connection to adaptive axis-aligned decision trees]
    By considering elements of $\G$ with $\ba_k=\be_k$ (the standard basis vectors in $ \mathbb{R}^p $) and $M=p$, we recover
    the additive library
    \[
    \F^{\text{add}}
    =
    \Bigg\{ f(\bx) =
    \sum_{j=1}^p f_j(x_j) :
    f_j : \mathbb{R}\mapsto \mathbb{R} \Bigg\}.
    \]
    Additive models have played an important role in the development of theory
    for CART. For example, \cite{scornet2015consistency} show
    consistency of axis-aligned CART for fixed dimensional additive models. More
    recent work has tried to illustrate the adaptive properties of axis-aligned CART on
    sparse additive models with growing dimensionality \citep{chi2020asymptotic,
      klusowski2022large, klusowski2020sparse, syrgkanis2020estimation}, some of
    which can be recovered as a special case of our more general theory. To see
    this, note that global optimization of the splitting criterion
    \eqref{eq:sse} is feasible with axis-aligned CART ($d=1$) and hence $ \kappa
    = 1 $ and $ P_{\A_{\bt}}(\kappa) = 1$.
    Then, according to~\eqref{eq:oracle_prune}, since $ d = 1 $,
    the pruned tree estimator is consistent for regression functions in the
    class $ \F^{\text{add}} $ even in the so-called NP-dimensionality
    regime, where
    $ \log(p) = O(n^{1-\xi}) $ for some $ \xi \in (0, 1) $.
    This result was previously established in~\citet{klusowski2022large} for
    axis-aligned CART.
\end{remark}

These sort of high dimensional consistency guarantees are not possible with
non-adaptive procedures that do not
automatically adjust the amount of smoothing along a particular dimension
according to how much the covariate affects the response variable. Such
procedures perform local estimation at a query point using data that are close
in every single dimension, making them prone to the curse of dimensionality even
if the true model is sparse
(typical minimax rates
\citep{gyorfi2002distribution} necessitate that $ p $ must grow at most
logarithmically in the sample size to ensure consistency).
This is the case with
conventional multivariate (Nadaraya-Watson or local polynomial) kernel
regression in which the bandwidth is the same for all directions,
or $k$-nearest neighbors with Euclidean distance.

\section{Fast Convergence Rates}
~\label{sec:fast}
When the model is well-specified and the response values are bounded
(i.e., $ \gamma = \infty $),
as Corollary~\ref{cor:oracle_consistency} illustrates,
the oracle inequality in~\eqref{eq:oracle_unprune} yields
relatively slow rates of convergence.
Because shallow oblique trees often compete empirically with wide neural
networks \citep{bertsimas2018optimal, bertsimas2019machine, bertsimas2021voice},
a proper mathematical theory should reflect such qualities.
It is therefore natural to compare these rates with the significantly
better $r_n = \sqrt{(p/n)\log(n)}$
rates for similar function libraries, achieved by neural
networks \citep{barron1994approximation}.
In both cases, the prediction risk
converges to zero if $ p = o(n/\log(n)) $
(or equivalently, if $ r_n = o(1) $),
but the speed differs from logarithmic to polynomial.
It is unclear whether the logarithmic rate for
oblique CART is optimal in general.
We can, however, obtain comparable rates to neural networks by granting two
assumptions.
Importantly, these assumptions only need to hold on average
(with respect to the joint distribution of the data and the search sets)
and \emph{not} almost surely for all realizations of the trees.
Because most papers that study the convergence rates of neural network
estimators proceed without regard for computational complexity, to ensure a fair
comparison, we will likewise
assume here that $ d = p $, $\kappa = 1 $, and $P_{\A_{\bt}}(\kappa) = 1$
(i.e., direct optimization of~\eqref{eq:sse}).

Our first additional assumption puts a global $ \ell_q $ constraint on the local $\L_1$
total variations of the regression function $ \mu $ across all terminal nodes of $ T_{K} $.
This is a type of regularity condition on both
the tree partition of $ \X $ and the regression function $ \mu $.
It ensures a degree of compatibility between the non-additive tree model and the
additive form of the regression function. In particular, if there existed an
(oblique) tessellation of the input space such that the target function is
piecewise constant, then the following assumption would hold trivially (i.e.,
the approximation model is correctly specified). The assumption more generally
disciplines the degree of misspecification in globally approximating the unknown
target conditional expectation function when employing adaptive oblique tree
methods.

\begin{assumption}[Aggregated $\ell_q$ variation]
~\label{as:tvbound}
The regression function $\mu$ belongs to $\F$
and there exist positive numbers $V$ and $ q > 2 $ such that,
for any $ K \geq 1$,
\begin{equation} \label{eq:tvbound}
  \E\Bigg[\sum_{\bt\in T_{K}}\|\mu\|^{q}_{\L_1(\bt)}\Bigg] \leq V^q.
\end{equation}
\end{assumption}
For fixed $K$ and finite $ \|\mu\|_{\L_1} $,
there is always some choice of $ V $ and $ q $ for
which~\eqref{eq:tvbound} is satisfied since
\[
\limsup_{q \rightarrow \infty}\Bigg(\E\Bigg[\sum_{\bt\in
  T_{K}}\|\mu\|^{q}_{\L_1(\bt)}\Bigg]\Bigg)^{1/q}
\leq
\E\Bigg[\max_{\bt \in T_{K}}\|\mu\|_{\L_1(\bt)}\Bigg] \leq \|\mu\|_{\L_1},
\]
and hence, for example,
$ \E\big[\sum_{\bt\in T_{K}}\|\mu\|^{q}_{\L_1(\bt)}\big]
\leq (2\|\mu\|_{\L_1})^q $
for $ q $ large enough, but finite.
However, this alone is not enough to validate Assumption \ref{as:tvbound}
because $q$ may depend on the sample size through its dependence on
the depth $K=K_n$.
Hence, it is important that~\eqref{eq:tvbound} hold for the same $q$
\emph{uniformly} over all depths.

It turns out that Assumption \ref{as:tvbound}
can be verified to hold for $ V = \|\mu\|_{\L_1} $
and all $ q > 2 $ when $ p = 1 $.
To see this, recall that
$ I(\ba, \bt)
=[\min_{\bx \in \bt}\ba^\Trans\bx,\; \max_{\bx \in \bt}\ba^\Trans\bx] $.
Because the collection of terminal nodes
$ \{\bt:\bt\in T_{K}\} $ forms a partition of $ \X $,
when $ p = 1 $, so does
$\{ I(\ba, \bt) : \bt \in T_{K}\} $
for
$
I(\ba, \X)
=
[\min_{\bx \in \X}\ba^\Trans\bx,\;
\max_{\bx \in \X}\ba^\Trans\bx]
$.
Thus, the $\L_1 $ total variation is additive over the nodes, i.e.,
$
\sum_{\bt\in T_{K}}\|\mu\|_{\L_1(\bt)} = \|\mu\|_{\L_1},
$
in which case,
\[
  \sum_{\bt\in T_{K}}\|\mu\|^q_{\L_1(\bt)}
  \leq
  \|\mu\|^q_{\L_1}, \quad q \geq 1.
\]
In general, for $ p > 1 $,
a crude and not very useful bound is
$ \sum_{\bt\in T_{K}}\|\mu\|^q_{\L_1(\bt)} \leq 2^K \|\mu\|^q_{\L_1}$;
however, the average size of
$ \sum_{\bt\in T_{K}}\|\mu\|^q_{\L_1(\bt)} $
will often be smaller because it depends on the specific geometry of the tree
partition of $ \X$,
which captures heterogeneity in the regression function
$\mu$. More specifically, the size will depend
on how the intervals $I(\ba, \bt)$ overlap across
$ \bt \in T_{K} $ as well as how much
$\mu$ varies within each terminal node.
We do not expect $ q $ to exceed the dimension $p$, provided that $\mu$ is smooth.
This is because, by smoothness, $ \|\mu\|_{\L_1(\bt)} $, a proxy for the  oscillation of
$\mu$ in the node is also a proxy for the diameter of the node.
Then, because the nodes are disjoint convex polytopes, on average, we expect $
\|\mu\|^p_{\L_1(\bt)} $ to be a proxy for their volume
(i.e., their $p$-dimensional Lebesgue measure),
in which case, $ \E\big[\sum_{\bt\in T_{K}}\|\mu\|^p_{\L_1(\bt)}\big] $ is a
constant multiple of the volume of $\X $.

Our final additional assumption puts a moment bound on the maximum number of
observations that any one node can contain.
Essentially, it says that the $\mathscr{L}_{\nu}$ norm of
$\max_{\bt \in T_{K}}n(\bt) $
is bounded by a multiple of the average number of observations per node.

\begin{assumption}[Node size moment bound]
~\label{as:node}
Let $ q > 2 $ be the positive number from Assumption \ref{as:tvbound}.
There exist positive numbers $A$ and $\nu \geq 1 + 2/(q-2) $ such that,
for any $ K \geq 1$,
\[
\bigg(\E\bigg[\bigg(\max_{\bt \in T_{K}}n(\bt)\bigg)^{\nu}\bigg]\bigg)^{1/\nu}
\leq \frac{An}{2^{K}}.
\]
\end{assumption}
Our risk bounds below show that $A = A_n$
is permitted to grow poly-logarithmically with the sample size,
without affecting the rate of convergence.
Because
$$
\E\bigg[\max_{\bt \in T_{K}}n(\bt)\bigg]
\leq
\bigg(\E\bigg[\bigg(\max_{\bt \in T_{K}}n(\bt)\bigg)^{\nu}\bigg]\bigg)^{1/\nu},
$$
and there are at most $2^{K}$ disjoint regions $ \bt $ in the partition of $\X$
induced by the tree at depth $ K $ such that
$ \sum_{\bt\in T_{K}}n(\bt) = n $,
Assumption~\ref{as:node} implies that, on average, no region
contains disproportionately more observations than the average number of
observations per region, i.e., $ n/2^K$.
Importantly, it still allows for situations where some regions contain very few
observations, which does tend to happen in practice.
For example, if $n=1000$, $ K = 2$, and $T_2$
has four terminal nodes with $n(\bt) \in \{5, 5, 495, 495\}  $,
then
$\max_{\bt \in T_{K}}n(\bt) \leq An/2^K $
holds with $A=2$.

Previous work by
\citet{bertsimas2018optimal} and \citet{bertsimas2019machine}
showed that feed-forward neural networks
with Heaviside activations can be transformed into
oblique decision trees with the same training error.
While these tree representations of neural networks require significant depth
(the depth of the tree in their construction is at least the width of the target
network),
they nonetheless demonstrate a proof-of-concept that supports their
extensive empirical investigations showing that the modeling power of
oblique decision trees is similar to neural networks,
even if the trees have modest depth ($K \leq 8$).
Our work not only complements these past studies,
it also addresses some of the scalability issues associated with global
optimization by theoretically validating greedy implementations.

\begin{lemma}
~\label{lmm:fasttraining}
Let $ d = p $, $\kappa = 1 $, and $P_{\A_{\bt}}(\kappa) = 1$,
and let Assumptions~\ref{as:tvbound} and~\ref{as:node} hold,
and assume $ \E[y^2\log(1+|y|)] < \infty $.
Then, for any $K\geq 1$,
\begin{equation}
  \label{eq:fasttrainlem}
  \E\big[\|  \by - \bmuhat( T_K) \|^2_n\big]
  \leq
  \E\big[\|  \by - \bmu \|^2_n\big]
  + \frac{AV^2}{4^{(K-1)/q}}.
\end{equation}
\end{lemma}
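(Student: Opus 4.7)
The plan is to build on the derivation of Lemma~\ref{lmm:training} by localizing the one-step orthogonal greedy inequality to individual nodes and then exploiting the extra $\ell_q$ structure furnished by Assumptions~\ref{as:tvbound} and~\ref{as:node}. The underlying idea is to trade the global bound $\|\mu\|_{\L_1}^2$ appearing in the slow-rate analysis for the smaller average $\sum_{\bt \in T_{K-1}} w(\bt)\|\mu\|_{\L_1(\bt)}^2$, and then to combine H\"older's inequality (with exponent $q/2$) with the node-size control from Assumption~\ref{as:node} to extract the exponential improvement $4^{-(K-1)/q}$.

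First I would use the orthogonal decomposition of Lemma~\ref{lmm:hilbert} to recover the telescoping identity~\eqref{eq:training_recursion}, so that after subtracting $\E[\|y-\mu\|_n^2]$ the expected excess training error $e_K := \E[\|y-\widehat{\mu}(T_K)\|_n^2] - \E[\|y-\mu\|_n^2]$ satisfies $e_K = e_{K-1} - \E\big[\sum_{\bt \in T_{K-1}}|\langle y, \psi_{\bt}\rangle_n|^2\big]$. Next I would establish a node-wise Barron/Jones-type inequality
\[
|\langle y, \psi_{\bt}\rangle_n|^2 \;\gtrsim\; \frac{(\text{local excess training error at }\bt)^2}{w(\bt)\,\|\mu\|_{\L_1(\bt)}^2},
\]
using the same greedy-approximation mechanism that drives Lemma~\ref{lmm:training} but with the localized total variation replacing the global one. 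Summing over $\bt \in T_{K-1}$ and applying Cauchy--Schwarz pathwise in its Engel/Sedrakyan form, then the ratio form of Jensen, $\E[X^2/Y] \geq (\E[X])^2/\E[Y]$, would yield
\[
\E\Big[\textstyle\sum_{\bt \in T_{K-1}}|\langle y, \psi_{\bt}\rangle_n|^2\Big] \;\gtrsim\; \frac{e_{K-1}^2}{\E\big[\sum_{\bt \in T_{K-1}} w(\bt)\,\|\mu\|_{\L_1(\bt)}^2\big]}.
\]

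To bound the denominator I would use $\sum_{\bt} w(\bt)\|\mu\|_{\L_1(\bt)}^2 \leq \max_{\bt} w(\bt) \cdot \sum_{\bt}\|\mu\|_{\L_1(\bt)}^2$, then H\"older with exponent $q/2$ across the at most $2^{K-1}$ terminal nodes to get $\sum_{\bt}\|\mu\|_{\L_1(\bt)}^2 \leq \big(\sum_{\bt}\|\mu\|_{\L_1(\bt)}^q\big)^{2/q} \, 2^{(K-1)(q-2)/q}$. A second H\"older step on the expectation with conjugate exponents $q/2$ and $q/(q-2)$ (this is precisely where Assumption~\ref{as:node}'s requirement $\nu \geq 1 + 2/(q-2) = q/(q-2)$ becomes tight), followed by Assumption~\ref{as:tvbound} on $\E[\sum_{\bt}\|\mu\|_{\L_1(\bt)}^q]$ and Assumption~\ref{as:node} on $\E[(\max_{\bt} n(\bt))^{\nu}]$, delivers
\[
\E\Big[\textstyle\sum_{\bt \in T_{K-1}} w(\bt)\,\|\mu\|_{\L_1(\bt)}^2\Big] \;\lesssim\; \frac{A V^2}{4^{(K-1)/q}}.
\]
Combining the two displays produces the quadratic recursion $e_K \leq e_{K-1} - c \cdot 4^{(K-1)/q}/(AV^2)\cdot e_{K-1}^2$, and maximizing the concave map $x \mapsto x - \alpha x^2$ over $x \geq 0$ (the worst case in $e_{K-1}$) yields $e_K \lesssim AV^2/4^{(K-1)/q}$, which is the claim.

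The main obstacle is the node-wise greedy inequality in the second step. Its global counterpart underlies Lemma~\ref{lmm:training}, but the localized version must carefully express $\mu$ restricted to $\bt$ as a signed mixture of ridge stumps of total mass $\|\mu\|_{\L_1(\bt)}$, absorb the noise cross-terms arising from the gap between $\langle y, \psi_{\bt}\rangle_n$ and $\langle \mu, \psi_{\bt}\rangle_n$, and retain the precise $w(\bt)^{-1}$ scaling needed for the ratio Cauchy--Schwarz to integrate cleanly. Once this localized greedy bound is in place, the combinatorial bookkeeping with the H\"older exponents $(q/2,\nu,q/(q-2))$ is routine, though it is this step that pins down the exact moment order $\nu \geq 1 + 2/(q-2)$ postulated in Assumption~\ref{as:node}.
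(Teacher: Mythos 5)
Your proposal tracks the paper's proof closely: the paper also derives the quadratic recursion via the orthogonal-decomposition telescoping identity, the node-wise impurity bound (Lemma~\ref{lem:impurity_bound}, which gives $\max_{(b,\ba)}\widehat\Delta(b,\ba,\bt)\geq w(\bt)R_{K-1}^2(\bt)/\|g\|_{\L_1(\bt)}^2$), Sedrakyan's inequality plus the ratio form of Jensen, and then exactly the same double H\"older step with exponents $q/2$ and $q/(q-2)$ to produce the bound $\E[\sum_{\bt\in T_{K-1}}w(\bt)\|\mu\|^2_{\L_1(\bt)}]\leq AV^2/4^{(K-1)/q}$. The only real difference is cosmetic: you solve the recursion $e_K\leq e_{K-1}(1-\alpha_K e_{K-1})$ by a single-step maximization of the concave map (giving $1/(4\alpha_K)$), whereas the paper invokes its general recursion Lemma~\ref{lem:rec} and drops all but the last term $b_K$ of the harmonic sum (giving $1/\alpha_K$); your version is valid and in fact a factor of four tighter. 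One small correction to your framing: there is no separate ``global'' impurity bound underlying Lemma~\ref{lmm:training} — both training-error lemmas use the same localized Lemma~\ref{lem:impurity_bound}, and the two results diverge only in how the denominator $\E[\sum_\bt w(\bt)\|g\|^2_{\L_1(\bt)}]$ is subsequently controlled (crude $\|g\|_{\L_1(\bt)}\leq\|g\|_{\L_1}$ for the slow rate versus the H\"older/moment argument for the fast rate).
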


\begin{theorem}
~\label{thm:fastoracle}
Let $ d = p $, $\kappa = 1 $, and $P_{\A_{\bt}}(\kappa) = 1$,
and let Assumptions~\ref{as:dgp},~\ref{as:tvbound},
and~\ref{as:node} hold.
Then, for any $ K \geq 1 $,
\begin{equation}
  \label{eq:rate_unprune}
  \E\big[\|\mu - \muhat( T_K)\|^2\big]
  \leq
  \frac{2AV^2}{4^{(K-1)/q}}
  +
  C\frac{2^{K+1} p\log^{4/\gamma+1}(n)}{n},
\end{equation}
where $ C = C(c_1,c_2,\gamma, M) $ is a positive constant.
Furthermore, if
the penalty coefficient satisfies
$ \lambda_n \gtrsim (p/n)\log^{4/\gamma+1}(n) $, then
\begin{equation}
  \label{eq:rate_prune}
  \E\big[\|\mu - \muhat( T_{\text{opt}})\|^2\big]
  \leq
  2(2+q)
  \Bigg(\frac{AV^2}{q} \Bigg)^{q/(2+q)}
  \Bigg(\frac{Cp \log^{4/\gamma+1}(n)}{n}\Bigg)^{2/(2+q)}.
\end{equation}
\end{theorem}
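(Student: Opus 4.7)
My plan is to follow the standard partition-based-estimator recipe: decompose the IMSE into a training-error piece (which is already bounded by Lemma~\ref{lmm:fasttraining}) and a stochastic complexity piece that quantifies the cost of fitting a data-adaptive tree, and then combine and optimize over depth. For the conversion from empirical to population $\mathscr{L}_2$ norm, I would apply the standard truncation argument: under Assumption~\ref{as:dgp}, both $y$ and the tree outputs can be truncated at a level $L_n \asymp \log^{1/\gamma}(n)$ (cf.~\eqref{eq:uniform}) with negligible bias, which lets me invoke the kind of deviation inequality used in \citet{gyorfi2002distribution} to get $\mathbb{E}\|\mu - \widehat{\mu}(T_K)\|^2 \leq 2\big(\mathbb{E}\|y - \widehat{\mu}(T_K)\|^2_n - \mathbb{E}\|y-\mu\|^2_n\big) + \mathrm{Complexity}(T_K, n)$. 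The parenthetical gap is precisely what Lemma~\ref{lmm:fasttraining} controls by $AV^2/4^{(K-1)/q}$, producing the leading term $2AV^2/4^{(K-1)/q}$ in~\eqref{eq:rate_unprune}.

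The next step is to bound the complexity term. The functions realizable by a depth-$K$ oblique tree with cell-constant outputs sit inside a partition-based sieve with at most $2^K$ cells whose boundaries are hyperplanes in $\mathbb{R}^p$; since linear halfspaces have VC dimension $p+1$ and there are at most $2^K - 1$ internal splits, the VC dimension of the induced partition class is of order $2^K p$. Plugging this into a chaining/covering bound for partition-based sieves (e.g., Chapter 11 of \citet{gyorfi2002distribution}), together with the $L_n^2 = \log^{2/\gamma}(n)$ factor produced by the truncation of a sub-exponential response, yields a complexity of order $(2^K p / n)\log^{4/\gamma+1}(n)$, where one extra $\log(n)$ arises from the covering-number integral. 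This supplies the second term of~\eqref{eq:rate_unprune}.

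For the pruned estimator $\widehat{\mu}(T_{\mathrm{opt}})$, I would use a standard penalized-model-selection chain. Every depth-$K$ tree $T_K$ is a pruned subtree of $T_{\text{max}}$ and hence a feasible point in~\eqref{eq:penalized}, so the minimizing property of $T_{\mathrm{opt}}$ gives $\|y - \widehat{\mu}(T_{\mathrm{opt}})\|^2_n + \lambda_n|T_{\mathrm{opt}}| \leq \|y - \widehat{\mu}(T_K)\|^2_n + \lambda_n 2^K$ for every $K \geq 1$. With the stated choice $\lambda_n \gtrsim (p/n)\log^{4/\gamma+1}(n)$, the penalty is large enough to absorb, uniformly in $K$, the complexity term from the previous paragraph via a structural-risk-minimization argument along the lines of Chapter 12 of \citet{gyorfi2002distribution}. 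Taking expectations and invoking Lemma~\ref{lmm:fasttraining} term by term then yields $\mathbb{E}\|\mu-\widehat{\mu}(T_{\mathrm{opt}})\|^2 \leq 2\inf_{K\geq 1}\big\{AV^2/4^{(K-1)/q} + C(2^K p/n)\log^{4/\gamma+1}(n)\big\}$.

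The final step is the calculus minimization in $K$. Writing $u = 2^K$, one minimizes $a u^{-2/q} + bu$ with $a \asymp AV^2$ and $b \asymp Cp\log^{4/\gamma+1}(n)/n$; the first-order condition gives the balancing choice $u^{\star} \asymp (nAV^2/(qp\log^{4/\gamma+1}(n)))^{q/(q+2)}$, and substituting back produces exactly the rate $(2+q)(AV^2/q)^{q/(q+2)}(Cp\log^{4/\gamma+1}(n)/n)^{2/(q+2)}$ times the overall factor of $2$ in~\eqref{eq:rate_prune}. The main obstacle is the complexity bound in the second paragraph: because both the partition and the cell-wise fits are jointly data-adaptive, one needs a uniform empirical-process bound over all depth-$K$ oblique trees, so both the VC argument for oblique partitions and the truncation bookkeeping under Assumption~\ref{as:dgp} must be executed carefully in order to land exactly on the $\log^{4/\gamma+1}(n)$ exponent appearing in the statement. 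Everything else—applying Lemma~\ref{lmm:fasttraining}, the pruning oracle inequality, and the $K$-optimization—is essentially algebraic once this complexity bound is in hand.
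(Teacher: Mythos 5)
Your proposal is correct and follows essentially the same route as the paper: the paper's proof also splits the IMSE into a stochastic-complexity part bounded via a covering-number argument (Theorem~11.4 of \citet{gyorfi2002distribution} together with the partitioning number of depth-$K$ oblique trees) under a truncation at level $\asymp\log^{1/\gamma}(n)$ justified by Assumption~\ref{as:dgp}, plus a training-error part controlled by Lemma~\ref{lmm:fasttraining}, then handles pruning via feasibility of each $T_K$ in the penalized objective and a union bound over $K$, and closes with the same calculus minimization over $K$. The only cosmetic discrepancy is in your bookkeeping of where the extra $\log(n)$ comes from: in the paper it enters through the $\log(np/d)$ factor of the partitioning number (which becomes $\log(n)$ when $d=p$), not through a chaining integral, but this does not affect the argument.
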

As mentioned earlier,
we see from~\eqref{eq:rate_prune} that $A=A_n$ (as well as $V=V_n$)
is allowed to grow poly-logarithmically without affecting the convergence rate.
When
the response values are bounded (i.e., $\gamma = \infty$),
the pruned tree estimator
$\muhat( T_{\text{opt}})$
achieves the rate $r_n^{2/(2+q)} =((p/n)\log(n))^{2/(2+q)}$,
which, when $ q \approx 2 $,
is nearly identical to the $ \sqrt{r_n} $ rate in~\citet{barron1994approximation}
for neural network estimators of regression functions
$ \mu \in \F$ with $\|\mu\|_{\L_1} < \infty$.
While we make two additional assumptions
(Assumptions~\ref{as:tvbound} and~\ref{as:node})
in order for oblique CART to achieve full modeling power on par
with neural networks, our theory suggests that decision trees
might be preferred in applications where interpretability is valued,
without suffering a major loss in predictive accuracy. We also see from these
risk bounds that $ q $ plays the role of an effective dimension, since it—and
not the ambient dimension $ p $—governs the convergence rates. As we have
argued above, if $\mu$ is smooth, then $q$ should be at most $ p $, and so the
convergence rate in~\eqref{eq:rate_prune} should always be at least as fast as the
minimax optimal rate $ (1/n)^{2/(2+p)} $ for smooth functions in $p$ dimensions.

\section{Oblique Random Forests}

A random forest is a randomized ensemble of trees. While traditional random
forests use axis-aligned trees, it is also possible to work with oblique trees.

The randomization mechanism in a random forest affects the way each tree is
constructed, and consists of two parts. The first part generates a subsample without replacement of
size $N < n$ from the original training data, on which the tree is trained,
and the second part generates a random collection of candidate splitting
directions at each node, from which the optimal one is chosen (see the
discussion under the \emph{purely random} heading in Section \ref{sec:main} for
generating $ \A_{\bt} $).

Let $ \Theta $ denote the random variable whose law governs the aforementioned
randomization mechanism and let $T_K(\Theta)$ be the associated maximal tree of
depth $K$. Let $ \boldsymbol{\Theta} = (\Theta_1, \dots, \Theta_B)^{\Trans} $
denote $ B $ independent copies of $\Theta $, corresponding to $ B $ trees $
T_K(\Theta_b) $, for $ b = 1, \dots, B $. The output of the random forest at a
point $ \bx $ is obtained by averaging the predictions of all $ B $ trees in the
forest, viz.,
\begin{equation*}
  \muhat(\boldsymbol{\Theta})(\bx)
  =
  \frac{1}{B}\sum_{b=1}^B
  \widehat\mu(T_K(\Theta_b))(\bx).
\end{equation*}
By convexity of squared error loss, the expected risk can be bounded as follows:
\begin{equation*}
  \E\big[\|\mu-\widehat\mu(\boldsymbol{\Theta})\|^2\big]
  \leq
  \frac{1}{B}\sum_{b=1}^B\E\big[\|\mu-\widehat\mu(T_K(\Theta_b))\|^2\big]
  = \E\big[\|\mu-\widehat\mu(T_K(\Theta))\|^2\big].
\end{equation*}
The above bound, although crude, tells us that we should expect the random
forest to perform no worse than a single (random) tree.

\subsection{Oracle inequality for oblique forests}
~\label{sec:orfmse}
We can now establish an oracle inequality for oblique forests similar to that of
Theorem~\ref{thm:oracle}.
Conditional on the randomness due to the indices $ \I \subset \{1, \ldots, n\}$
of the original training data that belong to
the subsampled training data, $ \widehat\mu(T_K(\Theta_b)) $ is a depth $ K $ oblique
tree (with randomized splits) trained on $ N $ samples for each draw $b=1,\dots,B$.
This means that
$\E\big[\|\mu-\widehat\mu(T_K(\Theta_b))\|^2 \mid \I \big]$
enjoys the \emph{exact} same bounds in Theorem \ref{thm:oracle}
but with $n $ replaced by the effective sample size $ |\I | = N $.
We formalize this notion in Theorem~\ref{thm:orf_oracle}.

\begin{theorem}[Oracle inequality for oblique forests]
~\label{thm:orf_oracle}
  Suppose Assumptions~\ref{as:dgp} holds.
  Let $\muhat(\boldsymbol{\Theta})$ be the output of the oblique random forest constructed
  with oblique trees of depth $ K $.
  Then,
  \begin{equation*}
    \E \big[\|\mu -\muhat(\boldsymbol{\Theta})\|^2\big]
    \leq 2 \inf_{f\in\F} \Bigg\{ \|\mu - f\|^2 +
    \frac{\|f\|_{\L_1}^2\E\big[\max_{\bt\in [T_K]} P^{-1}_{\A_{\bt}}(\kappa)\big]}
    {\kappa K} +
    C\frac{2^{K}d \log(Np/d)\log^{4/\gamma}(N)}{N} \Bigg\},
  \end{equation*}
  where $C$ is some positive constant
  and $N$ is the subsample size.
\end{theorem}

While the efficacy of forests is not reflected in these risk bounds,
they do show that forests of oblique trees inherit the same desirable properties as single trees.
It should be noted that the expectation in the second term of the bound in
Theorem~\ref{thm:orf_oracle} is over the subsampled data (instead of over the
entire data set as in Theorem~\ref{thm:oracle}). As such, for consistency results
similar to those in Corollary~\ref{cor:fixed_d}
and~\ref{cor:oracle_consistency},
the splitting probabilities would need to be almost surely bounded away from zero
(Assumption~\ref{as:unifprob}) for any realization of the subsampled data.
Additionally, with the stronger assumptions analogous results to
Theorem~\ref{thm:fastoracle} can also be derived for oblique forests.
We omit details to conserve space.

\section{Conclusion and Future Work}

We explored how oblique decision trees---which output constant
averages over polytopal partitions of the feature space---can be used for
predictive modeling with ridge expansions,
sometimes achieving the same convergence rates as neural networks.
The theory presented here is encouraging as it implies that
interpretable models can exhibit provably good performance similar to
their black-box counterparts such as neural networks.
The computational bottleneck still remains the main obstacle for practical
implementation. Crucially, however, our risk bounds show that
favorable performance can occur even if the optimization is only done
approximately.
We conclude with a discussion of some directions for potential future research.

\subsection{Multi-layer Networks}
We can go beyond approximating single-hidden layer neural networks if instead the split
boundaries of the oblique trees have the form $ \ba^{\Trans}\boldsymbol{\Phi}(\bx) = b $, where $ \boldsymbol{\Phi} $ is a
multi-dimensional feature map, such as the output layer of a neural network. For example, if
$ \boldsymbol{\Phi}_k(\bx) = \phi(\ba_k^{\Trans}\bx-b_k)  $,
where $ \phi $ is some activation function, then this additional flexibility
allows us to approximate two-hidden layer networks, i.e., functions of the form
$ \sum_{k_2}c_{k_2}\phi(\sum_{k_1}c_{k_1,k_2}
\phi(\ba_{k_1,k_2}^{\Trans}\bx-b_{k_1,k_2})) $.

\subsection{Classification}
While we have focused on regression trees,
oblique decision trees are commonly applied to the problem of binary
classification, i.e., $ y_i \in \{-1, 1\} $. In this case, because Gini impurity \citep{Hastie-Tibshirani-Friedman2009_book, breiman1984cart} is equivalent to the squared error criterion \eqref{eq:sse}, our results also directly apply to the classification setting provided the conditional class probability $ \eta(\bx) = \mathbb{P}(y = 1 \mid \bx) $ belongs to $ \F $ and has finite $\|\eta\|_{\mathcal{L}_1}$. A more natural assumption when modeling \emph{probabilities}, however, would be to have the log-odds $ f(\bx) = \log(\eta(\bx)/(1-\eta(\bx))) $ belong to $ \F $ and have finite $\|f\|_{\mathcal{L}_1}$. In this case, we must use another widely used
splitting criterion, the \emph{information gain}, namely, the amount by which
the binary entropy of the class probabilities in the node can be reduced from
splitting the parent node \citep{Hastie-Tibshirani-Friedman2009_book, quinlan1993programs}:
\[
\text{IG}(b, \ba, \bt)
=
H(\bt)
- \frac{n(\bt_L)}{n(\bt)}H(\bt_L)
- \frac{n(\bt_R)}{n(\bt)}H(\bt_R),
\]
where
$ H(\bt) = \eta(\bt)\log(1/\eta(\bt)) + (1-\eta(\bt))\log(1/(1-\eta(\bt))) $
and
$ \eta(\bt) = \frac{1}{n(\bt)}\sum_{\bx_i \in \bt}\Indicator(y_i = 1) $.
Interestingly, maximizing
the information gain in the node is equivalent to
minimizing the node-wise logistic loss with respect to the family of log-odds
models of the form $ \theta_{\bt}(\bx) = \beta_1\Indicator(\ba^\Trans\bx \leq b) +
\beta_2 \Indicator(\ba^\Trans\bx > b)$; that is,
\begin{equation*}
  (\hat b, \hat \ba)
  \in
  \argmax_{(b, \ba)}\text{IG}(b, \ba, \bt)
  \quad \iff
  \quad
  (\hat \beta_1, \hat\beta_2, \hat b, \hat\ba)
  \in
  \argmin_{(\beta_1, \beta_2, b, \ba)}
  \sum_{\bx_i \in \bt}
  \log(1+\exp(-y_i \theta_{\bt}(\bx_i))).
\end{equation*}
One can use techniques from \citet{klusowski2022large}, which exploits connections to sequential greedy
algorithms for other convex optimization problems \citep{zhang2003sequential}
(e.g., LogitBoost), to establish a training error bound (with respect to
logistic loss) akin to Lemma~\ref{lmm:training}.
\appendix

\section{Proofs}
\label{sec:proofs}
The main text presented theory for oblique trees that output a constant
(sample average) at each node.
Fortunately, most of our results hold in a much more general setting.
In particular, we can allow for the nodes to output
$ \hat{y}_{\bt}
\in \argmin_{h\in\text{span}(\calH)}\sum_{\bx_i \in \bt}(y_i - h(\bx_i))^2 $,
where $ \calH $ is a finite dictionary that contains the
constant function.
The proofs here deal with the general case.

In what follows, we assume without loss of generality that the infimum in the
definition of $\|f\|_{\L_1}$ for $ f \in \F $ is achieved at some element
$g \in \G$,
since otherwise,
there exists $ g \in \G $ with $ \|f-g\| $
arbitrarily small and $\|g\|_{\L_1} $
arbitrarily close to
$ \|f\|_{\L_1} $.
We denote the supremum norm of a function $f: \X \mapsto \R $ by
$\|f\|_{\infty} = \sup_{\bx \in \X} |f(\bx)|$. Additionally, we slightly abuse notation by taking
$ \by-\hat{y}_{\bt} $ to mean $ \by-\hat{y}_{\bt}\Indc $,
where $ \mathbf{1} = (1, \dots, 1)^{\Trans} $ is the $n\times 1$ vector of ones.

\begin{proof}[Proof of Lemma~\ref{lmm:hilbert}]
Set
$ \mathcal{U}_{\bt}
= \big\{ u(\bx)\Indicator(\bx \in \bt_L)
+ v(\bx)\Indicator(\bx \in \bt_R) : u, v \in \text{span}(\calH)\big\} $
and consider the closed subspace
$ \mathcal{V}_{\bt}
= \big\{ v(\bx)\Indicator(\bx \in \bt): v  \in \text{span}(\calH)\big\} $.
By the orthogonal decomposition property of Hilbert spaces,
we can express $ \mathcal{U}_{\bt} $ as the direct sum
$ \mathcal{V}_{\bt}\oplus \mathcal{V}^{\perp}_{\bt} $,
where
$ \mathcal{V}^{\perp}_{\bt}
= \{ u \in \mathcal{U}_{\bt} : \langle u, v \rangle_n = 0, \;
\text{for all} \; v \in \mathcal{V}_{\bt} \} $.
Let $ \Psi_{\bt} $ be any orthonormal basis for
$ \mathcal{V}_{\bt} $ that includes
$ w^{-1/2}(\bt)\Indicator(\bx \in \bt)$,
where we remind the reader that $ w(\bt) = n(\bt)/n $.
Let $ \Psi_{\bt}^{\perp} $ be any
orthonormal basis for $ \mathcal{V}^{\perp}_{\bt} $
that includes the decision stump~\eqref{eq:stumps_norm}.
We will show that
\begin{equation}
  \label{eq:gendecomp}
  \muhat(T)(\bx)
  = \sum_{\bt \in [T]}\sum_{\psi \in \Psi_{\bt}^{\perp}}
  \langle \by, \bpsi \rangle_n \psi(\bx),
\end{equation}
where $ \{ \psi \in \Psi^{\perp}_{\bt}: \bt \in [T] \} $ is an
orthonormal dictionary, and, furthermore, that
\begin{equation}
  \label{eq:impuritydec}
  \sum_{\psi \in \Psi_{\bt}^{\perp}}
  | \langle \by, \bpsi \rangle_n|^2
  = \widehat\Delta(\hat b, \hat \ba, \bt).
\end{equation}
These identities are the respective generalizations of \eqref{eq:orth} and
\eqref{eq:opt}.
Because $ \hat{y}_{\bt}(\bx) $ is the projection of $ \by $ onto
$ \mathcal{V}_{\bt} $,
it follows that
$
\hat{y}_{\bt}(\bx)
=
\sum_{\psi \in \Psi_{\bt}}
\langle \by, \bpsi \rangle_n \psi(\bx).
$
For similar reasons,
$
\hat{y}_{\bt_L}(\bx)\Indicator(\bx \in \bt_L)
+ \hat{y}_{\bt_R}(\bx)\Indicator(\bx \in \bt_R)
= \sum_{\psi \in \Psi_{\bt}\cup \Psi_{\bt}^{\perp}}
\langle \by, \bpsi \rangle_n \psi(\bx).
$

To prove the identity in \eqref{eq:gendecomp}
(and, as a special case,~\eqref{eq:orth}),
using the above expansions,
observe that for each internal node $\bt$,
\begin{equation}
  \label{eq:stumpinnerprod}
  \sum_{\psi \in \Psi_{\bt}^{\perp}}
  \langle \by, \bpsi \rangle_n \psi(\bx)
  = (\hat{y}_{\bt_L}(\bx)-\hat{y}_{\bt}(\bx))\Indicator(\bx \in \bt_L)
  + (\hat{y}_{\bt_R}(\bx)-\hat{y}_{\bt}(\bx))\Indicator(\bx \in \bt_R).
\end{equation}
For each $ \bx \in \X $,
let $\bt_0, \bt_1, \ldots, \bt_{K-1}, \bt_K = \bt$
be the unique path from the root node $\bt_0$
to the terminal node $\bt$ that contains $ \bx $.
Next, sum~\eqref{eq:stumpinnerprod}
over all internal nodes and telescope the
successive internal node outputs to obtain
\begin{equation}
  \label{eq:telescopingsum}
  \sum_{k=0}^{K-1}
  ( \hat{y}_{\bt_{k+1}}(\bx) - \hat{y}_{\bt_k}(\bx) )
  =
  \hat{y}_{\bt_K}(\bx)
  -
  \hat{y}_{\bt_0}(\bx)
  =
  \hat{y}_{\bt}(\bx)
  -
  \hat{y}(\bx),
\end{equation}
where $ \hat{y} \in \argmin_{h \in \calH}\sum_{i=1}^n(y_i - h(\bx_i))^2 $.
Combining~\eqref{eq:stumpinnerprod} and~\eqref{eq:telescopingsum}, we have
\begin{equation*}
  \sum_{\bt \in T}
  \hat{y}_{\bt}(\bx) \Indicator(\bx \in \bt)
  =
  \hat{y}(\bx)
  +
  \sum_{\bt \in [T]\setminus \{\bt_0\}}\sum_{\psi \in \Psi_{\bt}^{\perp}}
  \langle \by, \bpsi \rangle_n \psi(\bx)
  = \sum_{\bt \in [T]}\sum_{\psi \in \Psi_{\bt}^{\perp}}
  \langle \by, \bpsi \rangle_n \psi(\bx),
\end{equation*}
where we recall that the null node $ \bt_0 $
is an internal node of $ T $.
Next, we show that
$\{\psi \in \Psi_{\bt}^{\perp} : \bt \in [T] \}$
is orthonormal.
The fact that each $ \psi $ has unit norm,
$\|\bpsi\|_n^2 = 1$, is true by definition.
If $ \psi, \psi^{\prime} \in \Psi_{\bt}^{\perp} $,
then by definition,
$ \langle \bpsi, \bpsi^{\prime}\rangle_n = 0 $.
Let $ \bt $ and $ \bt^{\prime} $ be two distinct internal nodes
and suppose
$ \psi \in \Psi_{\bt}^{\perp} $
and $ \psi^{\prime} \in \Psi_{\bt^{\prime}}^{\perp} $.
If $ \bt \cap \bt^{\prime}  = \emptyset $,
then orthogonality between $ \psi $ and $ \psi^{\prime} $ is immediate,
since $ \psi(\bx) \cdot \psi^{\prime}(\bx) \equiv 0 $.
If $ \bt \cap \bt^{\prime}  \neq \emptyset $, then due to the nested property of
the nodes,
either $ \bt \subseteq \bt^{\prime} $ or $ \bt^{\prime} \subseteq \bt $.
Assume without loss of generality that $ \bt \subseteq \bt^{\prime} $.
Then $ \psi^{\prime}  $,
when restricted to $ \bx \in \bt $,
belongs to $ \mathcal{V}_{\bt} $,
which also implies that $ \psi $ and $ \psi^{\prime} $ are orthogonal,
since $ \psi \in \mathcal{V}_{\bt}^{\perp} $.

Finally, the decrease in impurity identity~\eqref{eq:impuritydec}
(and, as a special case,~\eqref{eq:opt})
can be shown as follows:
\begin{align*}
  \widehat\Delta(\hat b, \hat \ba, \bt)
  & =
    \frac{1}{n}\sum_{\bx_i \in \bt}
    (y_i - \hat{y}_{\bt}(\bx_i))^2
    - \frac{1}{n}\sum_{\bx_i \in \bt}
    (y_i - \hat{y}_{\bt_L}(\bx_i)\Indicator(\bx_i \in \bt_L)
    - \hat{y}_{\bt_R}(\bx_i)\Indicator(\bx_i \in \bt_R))^2
  \\
  & = \Bigg(\frac{1}{n}\sum_{\bx_i \in \bt}y^2_i
    -\sum_{\psi \in \Psi_{\bt}}
    |\langle \by, \bpsi \rangle_n|^2\Bigg)
    - \Bigg(\frac{1}{n}\sum_{\bx_i \in \bt}y^2_i
    -\sum_{\psi \in \Psi_{\bt}\cup \Psi_{\bt}^{\perp}}
    |\langle \by, \bpsi \rangle_n|^2\Bigg)
  \\
  & = \sum_{\psi \in \Psi_{\bt}^{\perp}}
    |\langle \by, \bpsi \rangle_n|^2.\qedhere
\end{align*}
\end{proof}

Throughout the remaining proofs, we will assume that there exists a positive constant
$ Q \geq 1 $ such that
$ \sup_{\bx\in \X}|\muhat(T)(\bx)| \leq Q \cdot \sqrt{\max_{1 \leq i\leq
    n}\frac{1}{i}\sum_{\ell=1}^i y^2_{\ell}} $,
almost surely. This assumption is drawn from the bound
\begin{equation*}
  |\hat{y}_{\bt}(\bx)|
  \leq
  \sqrt{\max_{1 \leq i\leq n}\frac{1}{i}\sum_{1\leq \ell\leq i} y^2_{\ell}}
  \sqrt{\vphantom{\frac{1}{i}}{w(\bt)\sum_{\psi \in \Psi_{\bt}}\psi^2(\bx)}},
\end{equation*}
which is established by first using the basis expansion for $ \hat{y}_{\bt} $
provided in the proof of Lemma~\ref{lmm:hilbert} and the Cauchy-Schwarz
inequality,
\begin{equation}
  |\hat{y}_{\bt}(\bx)|
  =
  \Bigg|\sum_{\psi \in \Psi_{\bt}}
  \langle \by, \psi \rangle_n \psi(\bx)\Bigg|
  \\
  \leq
  \sqrt{\sum_{\psi \in \Psi_{\bt}}|\langle \by, \bpsi \rangle_n|^2}
  \sqrt{\sum_{\psi \in \Psi_{\bt}}\psi^2(\bx)},
\end{equation}
and then, because $ \{\psi : \psi \in \Psi_{\bt} \} $ is orthonormal,
employing Bessel's inequality to obtain
$ \sum_{\psi \in
\Psi_{\bt}}|\langle \by, \bpsi \rangle_n|^2 \leq
n^{-1}\sum_{\bx_i\in\bt}y_i^2 \leq w(\bt)\max_{1 \leq i\leq
  n}\frac{1}{i}\sum_{\ell=1}^i y^2_{\ell} $.
Thus,
$Q$ could be taken to equal (or be an almost sure bound on) $
\sup_{\bx\in\X}\max_{\bt \in [T]} \sqrt{w(\bt)\sum_{\psi \in
\Psi_{\bt}}\psi^2(\bx)} $. In the conventional case where the
tree outputs a constant in each node, $ \Psi_{\bt} = \big\{
w^{-1/2}(\bt)\Indicator(\bx \in \bt) \big\} $, and hence $ Q = 1$.
To ensure that $ \muhat(T)(\bx) $ is
square-integrable,
i.e., $ \E\big[\sup_{\bx\in\X}|\muhat(T)(\bx)|^2\big] < \infty $,
we merely need to check that
$ \E\big[\max_{1 \leq i \leq n}\frac{1}{i}\sum_{\ell=1}^i
y_{\ell}^2\big]<\infty$. This
follows easily from Doob's maximal inequality for positive sub-martingales
\citep[Theorem 5.4.4]{durrett_2019},
since $ \E[y^2\log(1+|y|)] < \infty $ by assumption.

\begin{proof}[Proof of Lemmas~\ref{lmm:training} and~\ref{lmm:fasttraining}]
Define the excess training error as
\[
R_{K} = \|\by-\bmuhat( T_{K})\|^{2}_n - \|\by-\bg\|^{2}_{n}.
\]

Define the squared node-wise norm and node-wise inner product as
$ \|\bff\|^2_{\bt}
= \frac{1}{n(\bt)}\sum_{\bx_i \in \bt} (f(\bx_i))^2 $
and
$ \langle \bff, \bg\rangle_{\bt} = \frac{1}{n(\bt)}\sum_{\bx_i \in \bt}
f(\bx_i)g(\bx_i)$,
respectively.
We define the node-wide excess training error as
\[
R_{K}(\bt) = \|\by-\hat{y}_{\bt}\|^{2}_{\bt} - \|\by-\bg\|^{2}_{\bt}.
\]
We use this to rewrite the total excess training error as a weighted combination
of the node-wide excess train errors:
\[
  R_{K} = \sum_{\bt \in T_{K}} w(\bt)R_{K}(\bt),
  \quad w(\bt) = n(\bt)/n,
\]
where $\bt \in T_{K}$ means $\bt$ is a terminal node of $T_{K}$.
From the orthogonal decomposition of the tree,
as given in~\eqref{eq:gendecomp},
we have

\begin{equation}
  \label{eq:decomposition}
  \|\by-\bmuhat( T_{K})\|_{n}^{2}
  =
  \|\by-\bmuhat( T_{K-1})\|_{n}^{2}
  -
  \sum_{\bt \in T_{K-1}}\sum_{\psi \in \Psi_{\bt}^{\perp}}
  |\langle \by, \bpsi \rangle_n|^2.
\end{equation}

Subtracting $\|\by-\bg\|^{2}_n$ on both sides of~\eqref{eq:decomposition},
and using the definition of $R_{K}$, we obtain
\begin{equation}
  \label{eq:rkid}
  R_{K}
  =
  R_{K-1}
  -
  \sum_{\bt \in T_{K-1}}\sum_{\psi \in \Psi_{\bt}^{\perp}}
  |\langle \by, \bpsi \rangle_n|^2.
\end{equation}
Henceforth, we adopt the notation $\E_{T_K}[R_K]$ to mean that the expectation
is taken with respect to the joint distribution of
$ \{ \A_{\bt} : \bt \in [T_K]\} $,
conditional on the data.
We can assume
$\E[R_K]>0$ for all $ K \geq 1 $, since otherwise, by definition of $R_K$,
\begin{equation*}
  \E[R_K]
  =
  \E[ \|\by-\bmuhat( T_{K})\|^{2}_n - \|\by-\bg\|^{2}_{n} ]
  \leq 0,
\end{equation*}
which directly gives the desired result.

Using the law of iterated expectations
and the recursive relationship obtained in~\eqref{eq:rkid},
\begin{equation}
  \label{eq:trainingdecomp}
  \E_{T_K}[R_{K}]
  =
  \E_{T_{K-1}}[ \E_{T_K|T_{K-1}} [ R_{K} ] ]
  =
  \E_{T_{K-1}}[ R_{K-1} ]
  -
  \E_{T_{K-1}}
  \Bigg[
  \E_{T_K|T_{K-1}}
  \Bigg[
  \sum_{\bt \in T_{K-1}}
  \sum_{\psi \in \Psi_{\bt}^{\perp}}
  |\langle \by, \bpsi \rangle_n|^2
  \Bigg]
  \Bigg].
\end{equation}
By \eqref{eq:impuritydec}
and the sub-optimality probability, $P_{\A(\bt)}(\kappa)$,
we can rewrite the term inside the iterated expectation in~\eqref{eq:trainingdecomp} as
\begin{equation} \label{eq:impuritylower}
\begin{aligned}
  \sum_{\bt \in T_{K-1}}
  \sum_{\psi \in \Psi_{\bt}^{\perp}}
  |\langle \by, \bpsi \rangle_n|^2
  &
    =
    \sum_{\bt\in T_{K-1}}
    \widehat{\Delta}(\hat{b}, \hat{\ba}, \bt) \\
    & \geq
    \sum_{\bt\in T_{K-1}}
    \Indicator
    \Big(
    \widehat{\Delta}(\hat{b}, \hat{\ba}, \bt)
    \geq
    \kappa \max_{(b, \ba) \in \mathbb{R}^{1+p}}\widehat{\Delta}(b, \ba, \bt)
    \Big)
    \widehat{\Delta}(\hat b, \hat \ba, \bt) \\
    & \geq
    \kappa
    \sum_{\bt\in T_{K-1}}
    \Indicator
    \Big(
    \widehat{\Delta}(\hat{b}, \hat{\ba}, \bt)
    \geq
    \kappa \max_{(b, \ba) \in \mathbb{R}^{1+p}}\widehat{\Delta}(b, \ba, \bt)
    \Big)
    \max_{(b, \ba) \in \mathbb{R}^{1+p}}\widehat{\Delta}(b, \ba, \bt).
\end{aligned}
\end{equation}
Taking expectations of both sides of \eqref{eq:impuritylower}
with respect to the conditional distribution of $ T_{K} $ given $ T_{K-1} $,
we have
\begin{equation}
\label{eq:deltalowerind}
\begin{aligned}
  &
    \E_{T_K|T_{K-1}}\Bigg[
    \sum_{\bt \in T_{K-1}}
    \sum_{\psi \in \Psi_{\bt}^{\perp}}
    |\langle \by, \bpsi \rangle_n|^2
    \Bigg]
  \\
  &
   \qquad
    \geq
    \kappa
    \sum_{\bt\in T_{K-1}}
    \E_{T_K|T_{K-1}}\Big[
    \Indicator
    \Big(
    \widehat{\Delta}(\hat{b}, \hat{\ba}, \bt)
    \geq
    \kappa \max_{(b, \ba) \in \mathbb{R}^{1+p}}\widehat{\Delta}(b, \ba, \bt)
    \Big)
    \max_{(b, \ba) \in \mathbb{R}^{1+p}}\widehat{\Delta}(b, \ba, \bt)
    \Big].
\end{aligned}
\end{equation}
By definition of $P_{\A(\bt)}$,
\begin{equation} \label{eq:deltalowerprob}
  \begin{aligned}
    & \sum_{\bt\in T_{K-1}}
      \E_{T_K|T_{K-1}}\Big[
      \Indicator
      \Big(
      \widehat{\Delta}(\hat{b}, \hat{\ba}, \bt)
      \geq
      \kappa \max_{(b, \ba) \in \mathbb{R}^{1+p}}\widehat{\Delta}(b, \ba, \bt)
      \Big)
      \max_{(b, \ba) \in \mathbb{R}^{1+p}}\widehat{\Delta}(b, \ba, \bt)
      \Big]
    \\
    &
      =
      \sum_{\bt\in T_{K-1}}
      P_{\A_{\bt}}(\kappa)
      \max_{(b, \ba) \in \mathbb{R}^{1+p}}\widehat{\Delta}(b, \ba, \bt)
    \\
    &
      \geq
      \sum_{\bt\in T_{K-1}: R_{K-1}(\bt) > 0}
      P_{\A_{\bt}}(\kappa)
      \max_{(b, \ba) \in \mathbb{R}^{1+p}}\widehat{\Delta}(b, \ba, \bt).
  \end{aligned}
\end{equation}
In turn, by
Lemma~\ref{lem:impurity_bound},
\begin{equation}
  \label{eq:deltalowerrisk}
  \sum_{\bt\in T_{K-1}: R_{K-1}(\bt) > 0}
  P_{\A_{\bt}}(\kappa)
  \max_{(b, \ba) \in \mathbb{R}^{1+p}}\widehat{\Delta}(b, \ba, \bt)
  \geq
  \sum_{\bt \in T_{K-1}: R_{K-1}(\bt) >0}
  w(\bt)
  \frac{ R^2_{K-1}(\bt) }{P^{-1}_{\A_{\bt}}(\kappa)\|g\|^{2}_{\L_1(\bt)} },
\end{equation}
and Lemma~\ref{lmm:sedrakyan},
\begin{equation}
\label{eq:expbound}
\begin{aligned}
  \sum_{\bt \in T_{K-1}: R_{K-1}(\bt) >0}
  w(\bt)
  \frac{ R^2_{K-1}(\bt) }{P^{-1}_{\A_{\bt}}(\kappa)\|g\|^{2}_{\L_1(\bt)} }
  &
    \geq
    \frac{( \sum_{\bt\in T_{K-1}: R_{K-1}(\bt) >0}w(\bt) R_{K-1}(\bt) )^2}
    {\sum_{\bt \in T_{K-1}: R_{K-1}(\bt) >0}
    w(\bt)P^{-1}_{\A_{\bt}}(\kappa) \|g\|^2_{\L_1(\bt)}}
  \\
  &
    \geq
    \frac{(R^{+}_{K-1} )^2}
    {\sum_{\bt \in T_{K-1}} w(\bt)P^{-1}_{\A_{\bt}}(\kappa) \|g\|^2_{\L_1(\bt)}},
\end{aligned}
\end{equation}
where
$ R^{+}_{K-1}
= \sum_{\bt\in T_{K-1}: R_{K-1}(\bt) >0}w(\bt) R_{K-1}(\bt) \geq R_{K-1}$.
Combining~\eqref{eq:deltalowerind},~\eqref{eq:deltalowerprob},~\eqref{eq:deltalowerrisk},
and~\eqref{eq:expbound}
and plugging the result into~\eqref{eq:trainingdecomp},
we obtain
\begin{equation*}
  \E_{T_K}[R_{K}]
    \leq
    \E_{T_{K-1}}[R_{K-1}]
    -
    \kappa
    \E_{T_{K-1}}
    \Bigg[
    \frac{(R^{+}_{K-1})^2}
    {\sum_{\bt \in T_{K-1}} w(\bt)P^{-1}_{\A_{\bt}}(\kappa) \|g\|^2_{\L_1(\bt)}}
    \Bigg].
\end{equation*}
Using Lemma~\ref{lmm:sedrakyan} again, we have
\begin{equation*}
  \E_{T_{K-1}}
  \Bigg[
  \frac{(R^{+}_{K-1})^2}
  {\sum_{\bt \in T_{K-1}} w(\bt)P^{-1}_{\A_{\bt}}(\kappa) \|g\|^2_{\L_1(\bt)}}
  \Bigg]
  \geq
  \frac{(\E_{T_{K-1}}[R^{+}_{K-1}])^2}
  {\E_{T_{K-1}}\Big[
  \sum_{\bt \in T_{K-1}} w(\bt)P^{-1}_{\A_{\bt}}(\kappa) \|g\|^2_{\L_1(\bt)}
  \Big]}.
\end{equation*}
We have therefore derived the recursion
\begin{equation}
  \label{eq:recursion}
  \E_{T_K}[R_{K}]
  \leq
  \E_{T_{K-1}}[R_{K-1}]
  -
  \kappa
  \frac{(\E_{T_{K-1}}[R^{+}_{K-1}])^2}
  {\E_{T_{K-1}}\Big[\sum_{\bt \in T_{K-1}}
  w(\bt)P^{-1}_{\A_{\bt}}(\kappa) \|g\|^2_{\L_1(\bt)}\Big]}.
\end{equation}
Next, let us take the expectation of both sides of~\eqref{eq:recursion}
with respect to the data,
apply Lemma~\ref{lmm:sedrakyan} once again,
and use the fact that
$ R^{+}_{K-1} \geq R_{K-1} $
and $ \E[R_{K-1}] > 0 $ to obtain
\begin{align*}
  \E[R_{K}]
  &
  \leq
  \E[R_{K-1}]
  -
  \kappa
  \E\Bigg[\frac{(\E_{T_{K-1}}[R^{+}_{K-1}])^2}
  {\E_{T_{K-1}}\Big[
  \sum_{\bt \in T_{K-1}} w(\bt)P^{-1}_{\A_{\bt}}(\kappa) \|g\|^2_{\L_1(\bt)}
  \Big]}\Bigg]
  \\
  &
  \leq \E[R_{K-1}]
  -
  \kappa
  \frac{(\E[R^{+}_{K-1}])^2}
  {\E\Big[
  \sum_{\bt \in T_{K-1}} w(\bt)P^{-1}_{\A_{\bt}}(\kappa) \|g\|^2_{\L_1(\bt)}
  \Big]}
  \\
  &
  \leq
  \E[R_{K-1}]
  -
  \kappa
  \frac{(\E[R_{K-1}])^2}
  {\E\Big[
  \sum_{\bt \in T_{K-1}} w(\bt)P^{-1}_{\A_{\bt}}(\kappa) \|g\|^2_{\L_1(\bt)}
  \Big]}.
\end{align*}
We have therefore obtained a recursion for $ \E[R_{K}] $,
which we can now solve thanks to Lemma~\ref{lem:rec}.
Setting
$a_k = \E[R_k]$
and
$ b_k
= \kappa/
\E\big[\sum_{\bt \in T_{k-1}} w(\bt)P^{-1}_{\A_{\bt}}(\kappa)
\|g\|^2_{\L_1(\bt)}\big] $
in Lemma~\ref{lem:rec}, we have
\begin{equation} \label{eq:solution}
\E[R_K]
\leq
\frac{1}{\kappa\sum_{k=1}^K 1/
\E\big[\sum_{\bt \in T_{k-1}} w(\bt)P^{-1}_{\A_{\bt}}(\kappa)
\|g\|^2_{\L_1(\bt)}\big]}.
\end{equation}

The next part of the proof depends on the assumptions we make about
$w(\bt)$,
$ P_{\A_{\bt}}(\kappa) $,
and $  \|g\|^2_{\L_1(\bt)} $ and how they enable us to upper bound
\begin{equation*}
  \E\Bigg[
  \sum_{\bt \in T_{K-1}} w(\bt)P^{-1}_{\A_{\bt}}(\kappa) \|g\|^2_{\L_1(\bt)}
  \Bigg].
\end{equation*}

\textbf{For Lemma~\ref{lmm:training}:}
In this case, we do not impose any assumptions on $w(\bt)$.
We can use the fact that
$\sum_{\bt \in T_{K-1}} w(\bt) = 1$ and
$\|g\|^2_{\L_1(\bt)} \leq  \|g\|^2_{\L_1} $
for all $\bt\in T_{K-1}$
to get
\begin{align*}
  \E\Bigg[\sum_{\bt \in T_{K-1}} w(\bt)P^{-1}_{\A_{\bt}}(\kappa)
  \|g\|^2_{\L_1(\bt)}\Bigg]
  &
    \leq
    \|g\|^2_{\L_1}
    \E\Bigg[\max_{\bt\in T_{K-1}}P^{-1}_{\A_{\bt}}(\kappa)
    \sum_{\bt \in T_{K-1}} w(\bt)\Bigg]
  \\
  &
    = \|g\|^2_{\L_1}\E\Big[\max_{\bt\in T_{K-1}}P^{-1}_{\A_{\bt}}(\kappa)\Big]
  \\
  &
    \leq \|g\|^2_{\L_1}\E\Big[\max_{\bt\in [T_K]}P^{-1}_{\A_{\bt}}(\kappa)\Big].
\end{align*}
Plugging this bound into~\eqref{eq:solution},
we obtain the desired inequality in~\eqref{eq:trainlem} on the expected excess
training error, namely,
\[
\E[R_K]
\leq
\frac{\|g\|^2_{\L_1}
  \E\big[\max_{\bt\in [T_K]} P^{-1}_{\A_{\bt}}(\kappa)\big]}
{\kappa K}.
\]

\textbf{For Lemma~\ref{lmm:fasttraining}:}
If we grant
Assumptions~\ref{as:tvbound} and~\ref{as:node},
and take $ g = \mu \in \G $, we can arrive at a stronger bound.
Recall that we also assume that $ \kappa = 1 $ and $ P_{\A_{\bt}}(\kappa) = 1 $.
Since $q>2$,
by two successive applications of H\"older's inequality, we have
\begin{equation}
\label{eq:holder1}
  \sum_{\bt \in T_{K-1}} w(\bt)
  \|\mu\|^2_{\L_1(\bt)}
  \leq
  \Bigg(\sum_{\bt \in T_{K-1}}(w(\bt))^{q/(q-2)}
  \Bigg)^{1-2/q}
  \Bigg(\sum_{\bt \in T_{K-1}}\|\mu\|^q_{\L_1(\bt)}\Bigg)^{2/q},
\end{equation}
  and
\begin{equation}
  \begin{aligned}
  \label{eq:holder2}
    & \E\Bigg[\Bigg(\sum_{\bt \in T_{K-1}} (w(\bt))^{q/(q-2)}
    \Bigg)^{1-2/q}
    \Bigg(\sum_{\bt \in T_{K-1}}\|\mu\|^q_{\L_1(\bt)}\Bigg)^{2/q}\Bigg]
  \\
  &
    \qquad\qquad
    \leq
    \Bigg(\E\Bigg[\sum_{\bt \in T_{K-1}} (w(\bt))^{q/(q-2)}\Bigg]
    \Bigg)^{1-2/q}
    \Bigg(\E\Bigg[\sum_{\bt \in T_{K-1}}\|\mu\|^q_{\L_1(\bt)}\Bigg]\Bigg)^{2/q}.
  \end{aligned}
\end{equation}
Combining the two inequalities \eqref{eq:holder1} and \eqref{eq:holder2},
we obtain
\begin{equation*}
  \E\Bigg[\sum_{\bt \in T_{K-1}} w(\bt)
  \|\mu\|^2_{\L_1(\bt)}\Bigg]
  \leq
  \Bigg(\E\Bigg[\sum_{\bt \in T_{K-1}} ( w(\bt) )^{q/(q-2)}\Bigg]
  \Bigg)^{1-2/q}
  \Bigg(\E\Bigg[\sum_{\bt \in T_{K-1}}\|\mu\|^q_{\L_1(\bt)}\Bigg]\Bigg)^{2/q}.
\end{equation*}
Assumptions~\ref{as:tvbound} and~\ref{as:node} provide further upper bounds,
since
\begin{align*}
  &
    \Bigg(\E\Bigg[\sum_{\bt \in T_{K-1}} (w(\bt))^{q/(q-2)}\Bigg]
    \Bigg)^{1-2/q}
    \Bigg(\E\Bigg[\sum_{\bt \in T_{K-1}}\|\mu\|^q_{\L_1(\bt)}\Bigg]\Bigg)^{2/q}
  \\
  & \leq
    \Bigg(2^{K-1}
    \E\Bigg[
    \Bigg(\max_{\bt \in T_{K-1}} w(\bt)
    \Bigg)^{q/(q-2)}
    \Bigg]\Bigg)^{1-2/q}
    \Bigg(\E\Bigg[\sum_{\bt \in T_{K-1}}\|\mu\|^q_{\L_1(\bt)}\Bigg]\Bigg)^{2/q}
  \\
  & \leq
    2^{(K-1)(1-2/q)}
    \Bigg(\E\Bigg[
    \Bigg(\max_{\bt \in T_{K-1}} w(\bt)
    \Bigg)^{\nu}
    \Bigg]\Bigg)^{1/\nu}
    \Bigg(\E\Bigg[\sum_{\bt \in T_{K-1}}\|\mu\|^q_{\L_1(\bt)}\Bigg]\Bigg)^{2/q} \\
    & \leq \frac{AV^2}{4^{(K-1)/q}}.
\end{align*}
Plugging this bound into \eqref{eq:solution},
we obtain the desired inequality \eqref{eq:fasttrainlem} on the expected excess
training error, namely,
$
\E[R_K] \leq \frac{AV^2}{4^{(K-1)/q}}
$.
\end{proof}

\begin{proof}[Proof of Theorems~\ref{thm:oracle} and~\ref{thm:fastoracle}]
We begin by splitting the MSE
(averaging only with respect to the joint distribution of
$\{\A_{\bt}: \bt \in [T_k] \}$)
into two terms,
$\E_{T_k}\big[\|\mu-\muhat( T_{k}) \|^{2}\big] = E_{1} + E_{2}$,
where
\begin{align}
  \label{eq:eterms}
  & E_{1}
    =
    \E_{T_k}\big[
    \|\mu-\muhat( T_{k}) \|^{2}\big]
    - 2\big(\E_{T_K}\big[\|\by-\bmuhat( T_{k}) \|^{2}_{n}\big]
    - \|\by-\bmu\|^{2}_{n}\big)
    - \alpha(n, k) - \beta(n)
  \\&
  E_{2}
  =
  2\big(
  \E_{T_k}\big[
  \|\by-\bmuhat( T_{k}) \|^{2}_{n}\big]
  - \|\by-\bmu\|^{2}_{n}\big)
  + \alpha(n, k) + \beta(n),
\end{align}

and where $\alpha(n, k)$ and $\beta(n)$
are positive sequences that will be specified later.

To bound $ \E[E_1] $,
we split our analysis into two cases based on the observed data $y_i$.
Accordingly, we have
\begin{equation}
  \label{eq:esplit}
  \E[E_1]
  =
  \E[E_1\Indicator(\forall i: |y_i|\leq B)]
  +
  \E[E_1\Indicator(\exists i: |y_i|>B)], \quad B \geq 0.
\end{equation}

\subsubsection*{Bounded term}

We start by looking at the first term on the right hand side
of~\eqref{eq:esplit}.

Proceeding, we introduce a few useful concepts and definitions for studying
data-dependent partitions,
due to \citet{nobel1996histogram}. Let
\begin{equation*}
  \Lambda_{n, k}
  =
  \big\{ \mathcal{P}(\{(\tilde y_1, \tilde \bx_1^{\Trans}), \dots,
  (\tilde y_n, \tilde \bx_n^{\Trans})\}) :
  (\tilde y_i, \tilde \bx_i^{\Trans}) \in \mathbb{R}^{1+p} \big\}
\end{equation*}
be the family of all achievable partitions $ \mathcal{P} $
by growing a depth $ k $ oblique decision tree on
$ n $ data points with split boundaries of the form $ \bx^{\Trans}\ba = b $,
where  $ \|\ba\|_{\ell_0} \leq d $.
In particular, note that $ \Lambda_{n, k} $
contains all data-dependent partitions.
We also define
\begin{equation*}
  M(\Lambda_{n, k})
  =
  \max\{\, |\mathcal{P}| : \mathcal{P} \in \Lambda_{n, k}\}
\end{equation*}
to be the maximum number of terminal nodes
among all partitions in $ \Lambda_{n, k} $.
Note that $ M(\Lambda_{n, k}) \leq 2^k $
(this statement does not rely on the specific algorithm used to
grow a depth $k$ oblique tree,
as long as the tree generates a partition of $\X $ at each level).
Given a set $ \bz^n = \{\bz_1, \bz_2, \dots, \bz_n \} \subset \mathbb{R}^p $,
define $ \Gamma(\bz^n, \Lambda_{n, k}) $ to be the number of distinct partitions
of $ \bz^n $ induced by elements of $ \Lambda_{n, k} $, that is, the number of
different partitions $ \{\bz^n \cap A : A \in \mathcal{P} \} $, for $
\mathcal{P} \in \Lambda_{n, k} $.
The partitioning number $ \Gamma_{n, k}(\Lambda_{n, k}) $ is defined by
\[
\Gamma_{n, k}(\Lambda_{n, k})
= \max\{\, \Gamma(\bz^n, \Lambda_{n, k})
: \bz_1, \bz_2, \dots, \bz_n \in \mathbb{R}^p \},
\]
i.e., the maximum number of different partitions of any $n$ point set that can
be induced by members of $ \Lambda_{n, k} $.
Finally, let $ \F_{n, k}(R) $
denote the collection of all functions (bounded by $ R $)
that output an element of
$ \text{span}(\mathcal{H})$
on each region from a partition
$ \mathcal{P} \in \Lambda_{n, k} $.

We can deduce that the partitioning number
is bounded by
\begin{equation*}
  \Gamma_{n, k}(\Lambda_{n, k})
  \leq
  \Bigg(\binom{p}{d}n^d\Bigg)^{2^k}
  \leq \Bigg(\Bigg(\frac{ep}{d}\Bigg)^d n^d\Bigg)^{2^k}
  = \Bigg(\frac{e n p}{d}\Bigg)^{d2^k}.
\end{equation*}
The bound on $\Gamma_{n, k}$ follows from the maximum number of ways in which
$n$ data points can be split by a hyperplane in $d$ dimensions.
The $\binom{p}{d}$ factor accounts for the number of ways in which
a $d$-dimensional hyperplane can be constructed in a $p$-dimensional space.
Note that this bound is not derived from the specific algorithm used to select
the splitting hyperplanes; it is purely
combinatorial.

Then, by slightly modifying the calculations in
\citet[p. 240]{gyorfi2002distribution}
and combining them with
\citet[Lemma 13.1, Theorem 9.4]{gyorfi2002distribution},
we have the following bound for the covering number
$ \N(r, \F_{n, k}(R), \mathscr{L}_1(\mathbb{P}_{\bx^n})) $
of $ \F_{n, k}(R) $
by balls of radius $ r > 0 $
in $ \mathscr{L}_1(\mathbb{P}_{\bx^n}) $
with respect to the empirical discrete measure
$ \mathbb{P}_{\bx^n} $
on
$ \bx^n = \{\bx_1, \bx_2, \dots, \bx_n \} \subset \mathbb{R}^p $:
\begin{equation}
  \label{eq:covering_number}
\begin{aligned}
  \N \Bigg(\frac{\beta(n)}{40 R},
  \F_{n, k}(R),
  \mathscr{L}_1(\mathbb{P}_{\bx^n})\Bigg)
  &
    \leq \Gamma_{n, k}(\Lambda_{n, k})
    \Bigg(3 \Bigg(
    \frac{6eR}{\frac{\beta(n)}{40R}}
    \Bigg)^{2\text{VC}(\calH)}\Bigg)^{2^{k}}
  \\
  &
    \leq
    \Bigg(\Bigg(\frac{e n p}{d}\Bigg)^d\Bigg)^{2^k}
    \Bigg(3 \Bigg(\frac{240 e R^{2}}{\beta(n)}\Bigg)^{2\text{VC}(\calH)}\Bigg)^{2^{k}}
  \\
  & = \Bigg(3\Bigg(\frac{e n p}{d}\Bigg)^d\Bigg)^{2^k}
    \Bigg(\frac{240 e R^{2}}{\beta(n)}\Bigg)^{\text{VC}(\calH) 2^{k+1}},
\end{aligned}
\end{equation}
where we use $\text{VC}(\calH)$ to denote the VC dimension of
$\text{span}(\calH)$.
According to~\eqref{eq:uniform},
we know that the regression function is uniformly bounded,
$ \|\mu\|_{\infty} \leq M^{\prime} $. Let $ R = QB $.
We assume, without loss of generality, that
$R \geq M^{\prime}$
so that
$ \|\mu\|_{\infty} \leq R $
and $ ||\muhat( T_k)\|_{\infty} \leq R $ almost surely,
if
$ \max_{1 \leq i \leq n}|y_i| \leq B $.
By \citet[Theorem 11.4]{gyorfi2002distribution},
with $\epsilon = 1/2$ (in their notation),
\begin{equation}
  \begin{aligned}
    \label{eq:covrbound}
    &
      \Prob
      \Big(
      \exists f \in \F_{n,k}(R) :
      \|\mu - f\|^{2}
      \geq
      2(\|\by-\bff\|_n^{2} - \|\by-\bmu\|_{n}^{2})
      + \alpha(n, k) + \beta(n), \;
      \forall i : |y_i|\leq B
      \Big)
    \\
    &
      \qquad\qquad
      \leq
      14
      \sup_{\bx^{n}}
      \N \Bigg(\frac{\beta(n)}{40R}, \F_{n,k}(R), \mathscr{L}_{1}(\Prob_{\bx^{n}}) \Bigg)
      \exp \Bigg( - \frac{\alpha(n, k) n}{2568 R^{4}} \Bigg).
  \end{aligned}
\end{equation}
Then, we have the following probability concentration
\begin{align}
  \label{eq:prob_bound}
  &  \Prob
    \big(
    \E_{T_K}\big[ \|\mu - \muhat( T_{k})\|\big]
    \geq
    2(\E_{T_K}\big[\|\by-\bmuhat( T_{k})\|_n^{2}\big] - \|\by-\bmu\|_{n}^{2})
    + \alpha(n, k) + \beta(n),\; \forall i : |y_i|\leq B
    \big)
    \nonumber
  \\
  & \qquad \qquad \qquad
    \leq
    14
    \sup_{\bx^{n}}
    \N \Bigg(\frac{\beta(n)}{40R}, \F_{n,k}(R), \mathscr{L}_{1}(\Prob_{\bx^{n}}) \Bigg)
    \exp \Bigg( - \frac{\alpha(n, k) n}{2568 R^{4}} \Bigg).
\end{align}

This inequality follows from the fact that, on the event
$\{\forall i : |y_i|\leq B\} $, if
\begin{equation*}
  \E_{T_K}\big[ \|\mu - \muhat( T_{k})\|^{2}
  -2\|\by-\bmuhat( T_{k})\|_n^{2}\big] \geq  -2\|\by-\bmu\|_{n}^{2}
  + \alpha(n, k) + \beta(n)
\end{equation*}
holds, then there exists a realization
$\muhat(T^{\prime}_{k})\in \F_{n,k}(R)$
such that
\begin{equation*}
  \|\mu - \muhat(T^{\prime}_{k})\|^{2}
  -2\|\by-\bmuhat(T^{\prime}_{k})\|_n^{2} \geq  -2\|\by-\bmu\|_{n}^{2}
  + \alpha(n, k) + \beta(n),
\end{equation*}
and hence
\begin{align*}
  &
    \Prob
    \Big(\E_{T_K}\big[ \|\mu - \muhat( T_{k})\|\big]
    \geq
    2(\E_{T_K}\big[\|\by-\bmuhat( T_{k})\|_n^{2}\big] - \|\by-\bmu\|_{n}^{2})
    + \alpha(n, k) + \beta(n), \; \forall i : |y_i|\leq B
    \Big)
  \\
  &
    \qquad
    \leq
    \Prob
    \Big(
    \exists f \in \F_{n,k}(R) :
    \|\mu - f\|^{2}
    \geq
    2(\|\by-\bff\|_n^{2} - \|\by-\bmu\|_{n}^{2})
    + \alpha(n, k) + \beta(n), \; \forall i : |y_i|\leq B
    \Big).
\end{align*}
We can now plug in the result of~\eqref{eq:covering_number}
into~\eqref{eq:prob_bound} to obtain
\begin{equation}
  \label{eq:gyorfibound}
  \Prob(E_{1} \geq 0, \; \forall i : |y_i|\leq B)
  \leq
  14
   \Bigg(3\Bigg(\frac{e n p}{d}\Bigg)^d\Bigg)^{2^k}
  \Bigg(\frac{240 e R^2}{\beta(n)}\Bigg)^{\text{VC}(\calH) 2^{k+1}}
  \exp\Bigg( - \frac{\alpha(n, k) n}{2568 R^4} \Bigg).
\end{equation}

We choose
\begin{align*}
  \alpha(n, k)
  &=
  \frac{2568 R^4\Big(2^{k}d\log(e n p/d) + 2^{k}\log (3) +
  \text{VC}(\mathcal{H})2^{k+1}\log ( \frac{240 e R^2}{\beta(n)} )
    + \log (14n^2) \Big)}{n}
  \\
  \beta(n) &= \frac{240 e R^2}{n^2}
\end{align*}
so that
$\Prob(E_{1} \geq 0, \; \forall i : |y_i|\leq B) \leq 1/n^2$.
Thus,
\begin{equation*}
  E_{1}\Indicator(\forall i: |y_i|\leq B)
  \leq
  \big(\E_{T_K}\big[\|\mu-\muhat( T_{k}) \|^{2}\big]
  + 2\|\by-\bmu\|^{2}_{n}\big)
  \Indicator(\forall i: |y_i|\leq B)
  \leq
  12R^2,
\end{equation*}
and so we have
\begin{equation}
  \label{eq:e1bounded}
  \E\big[E_1\Indicator(\forall i: |y_i|\leq B)\big]
  \leq 12 R^2
  \Prob(E_{1}\geq 0, \; \forall i : |y_i|\leq B)
  \leq
  \frac{12 R^2}{n^2} = \frac{12 Q^2B^2}{n^2}.
\end{equation}

\subsubsection*{Unbounded term}

We now look at the second term on the right hand side of~\eqref{eq:esplit}.
Because we have
$ \|\muhat(T_k)\|_{\infty}
\leq
Q \cdot \sqrt{\max_{1 \leq i\leq n}
  \frac{1}{i}\sum_{\ell=1}^i y^2_{\ell}}$
almost surely,
we can bound
\begin{equation*}
  \E\big[
  \|\mu - \muhat( T_{k})\|^{2}\Indicator(\exists i: |y_i|>B)
  \big]
  \leq
  (Q+1)^2\E\Big[
  \max_{1\leq i\leq n}\max\big\{y^2, y_i^2\big\}\Indicator(\exists i: |y_i|>B)
  \Big].
\end{equation*}
Using the fact that the sum of non-negative variables upper bounds their maximum,
and the
exponential concentration
of the conditional distribution of $ y $ given $ \bx $
(Assumption~\ref{as:dgp}) together with a union bound,
we can then apply Cauchy-Schwarz to obtain
\begin{equation*}
  \E\big[
  \|\mu - \muhat( T_{k})\|^{2}\Indicator(\exists i: |y_i|>B)
  \big]
  \leq
  (Q+1)^2
  \textstyle\sqrt{(n+1)\E[y^4]}
  \textstyle\sqrt{nc_1\exp(-c_2(B-M)^{\gamma})}.
\end{equation*}
Setting
$B=B_n = M + \big((6/c_2)\log(n+1)\big)^{1/\gamma} \geq M^{\prime} $,
we have that
\begin{equation}
  \label{eq:tailbound}
  \E\big[\|\mu-\muhat( T_{k})\|^{2}\Indicator(\exists i: |y_i|>B)\big]
  \leq
  \frac{(Q+1)^2{\textstyle\sqrt{c_1\E\big[y^4\big]}}}{n^2}.
\end{equation}
Thus combining~\eqref{eq:e1bounded} and~\eqref{eq:tailbound},
we have
\begin{equation}
\label{eq:tailandbounded}
\begin{aligned}
  \E[E_1]
  & =
    \E[E_1\Indicator(\forall i: |y_i|\leq B)]
    +
    \E[E_1\Indicator(\exists i: |y_i|>B)]
  \\
  & \leq
    \frac{12 Q^2 B^2}{n^2}
    + \frac{(Q+1)^2{\textstyle\sqrt{c_1\E\big[y^4\big]}}}{n^2}
    = O\Bigg(\frac{\log^{2/\gamma}(n)}{n^2}\Bigg).
\end{aligned}
\end{equation}

Next, we turn our attention to $ \E[E_2] $. Since
\[
\E\big[
  \|\by-\bmuhat( T_{k}) \|^{2}_{n}
  - \|\by-\bmu\|^{2}_{n}\big] = \|\mu-g\|^2 + \E\big[
  \|\by-\bmuhat( T_{k}) \|^{2}_{n}
  - \|\by-\bg\|^{2}_{n}\big],
\]
it follows that
\begin{equation}
  \label{eq:e2eq}
  \E[E_2] = 2\|\mu-g\|^2
  + 2\E\big[
  \|\by-\bmuhat( T_{k}) \|^{2}_{n}
  - \|\by-\bg\|^{2}_{n}\big] + \alpha(n, k) + \beta(n).
\end{equation}
Finally,
combining the bounds~\eqref{eq:tailandbounded}
and~\eqref{eq:e2eq} and simplifying $\alpha(n, k)$ and $\beta(n)$,
\begin{equation}
  \begin{aligned}
    \label{eq:ebound}
    &
      \E\big[\|\mu-\muhat( T_{K}) \|^{2}\big]
    \\
    &
      \quad \leq
      2\|\mu-g\|^2
      +
      2\E\big[\|\by-\bmuhat( T_{K}) \|^{2}_{n}
      - \|\by-\bg\|^{2}_{n}\big]
      + C\frac{2^{K}(d + \text{VC}(\calH))\log(np/d)\log^{4/\gamma}(n)}{n},
  \end{aligned}
\end{equation}
for some positive constant $ C=C(c_1,c_2,\gamma, M, Q)$.

\subsubsection*{Pruned tree}

We now consider the pruned tree, $T_{\text{opt}}$.
Let
$\E_{T_{\text{opt}}}\big[\|\mu-\muhat(T_{\text{opt}}) \|^{2}\big]
= E^{\prime}_{1} + E^{\prime}_{2}$,
where
\begin{align*}
    E^{\prime}_{1}
  &
    =
    \E_{T_{\text{opt}}}\big[
    \|\mu-\muhat( T_{\text{opt}}) \|^{2}\big]
    - 2(\E_{T_{\text{opt}}}\big[\|\by-\bmuhat( T_{\text{opt}}) \|^{2}_{n}\big]
    - \|\by-\bmu\|^{2}_{n})
    - 2\lambda |T_{\text{opt}}|
  \\
    E^{\prime}_{2}
  &
    =
    2(
    \E_{T_{\text{opt}}}\big[
    \|\by-\bmuhat( T_{\text{opt}}) \|^{2}_{n}\big]
    - \|\by-\bmu\|^{2}_{n})
    + 2\lambda |T_{\text{opt}}|.
\end{align*}
Note that, for each $ k = 1, 2, \dots, n-1 $,
\begin{equation*}
  \|\by - \bmuhat( T_{\text{opt}})\|^2_n
  + \lambda |T_{\text{opt}}|
  \leq
  \|\by - \bmuhat( T_k)\|^2_n + \lambda 2^k,
\end{equation*}
and hence, for each $ k \geq 1 $,
\begin{equation} \label{eq:e2prime}
\E[E^{\prime}_{2}] \leq 2\|\mu-g\|^2 + 2\E\big[
  \|\by-\bmuhat( T_{k}) \|^{2}_{n}
  - \|\by-\bg\|^{2}_{n}\big] + \lambda 2^{k+1}.
\end{equation}
Choose
$ \lambda = \lambda_{n} $
such that
$ \alpha(n, k) + \beta(n) \leq \lambda_{n} 2^{k+1} $.
This implies that
$ \lambda_{n} \gtrsim \frac{(d+\text{VC}(\calH))\log(np/d)\log^{4/\gamma}(n)}{n} $.
For each realization of $T_{\text{opt}}$,
there exists $k$ such that $|T_{\text{opt}}| \geq 2^k$.
By a union bound and the result established in~\eqref{eq:gyorfibound},
we have
\begin{align*}
  P(E_1^{\prime} \geq 0)
  & \leq
    \Prob(\E_{T_{\text{opt}}}\big[\|\mu - \muhat( T_{\text{opt}}) \|^2\big]
    \geq 2(\E_{T_{\text{opt}}}\big[\|\by-\bmuhat( T_{\text{opt}})\|^2_n\big]
    - \|\by-\bmu\|^2_n)
    + 2\lambda_{n} |T_{\text{opt}}|)
  \\
  & \leq
    \sum_{1 \leq k \leq n-1} \Prob
    (
    \exists f \in \F_{n,k}(R) :
    \|\mu - f\|^{2}
    \geq
    2(\|\by-\bff\|_n^{2} - \|\by-\bmu\|_{n}^{2})
    + \lambda_{n} 2^{k+1}
    )
  \\
  & \leq
    \sum_{1 \leq k \leq n-1}
    \Prob
    (
    \exists f \in \F_{n,k}(R) :
    \|\mu - f\|^{2}
    \geq
    2(\|\by-\bff\|_n^{2} - \|\by-\bmu\|_{n}^{2})
    + \alpha(n, k) + \beta(n)
    )
  \\
  &
    \leq \sum_{1 \leq k \leq n-1}n^{-2}
    \leq 1/n.
\end{align*}
Once again, we split the expectation, $\E[E^{\prime}_1]$ into two cases,
as in~\eqref{eq:esplit},
and bound each case separately.
The argument is identical to that for the un-pruned tree
so we omit details here.
Combining this bound on $\E[E^{\prime}_1]$ with~\eqref{eq:e2prime}
gives as an analogous result to~\eqref{eq:ebound}, namely,
for all $K\geq 1$,
\begin{equation}
  \begin{aligned}
    \label{eq:eprimebound}
    &
      \E\big[\|\mu-\muhat( T_{\text{opt}}) \|^{2}\big]
    \\
    &
      \quad \leq
      2\|\mu-g\|^2
      +
      2\E\big[\|\by-\bmuhat( T_{K}) \|^{2}_{n}
      - \|\by-\bg\|^{2}_{n}\big]
      + C\frac{2^{K}(p + \text{VC}(\calH))\log^{1+4/\gamma}(n)}{n},
  \end{aligned}
\end{equation}
for some positive constant $ C=C(c_1,c_2,\gamma, M, Q)$.

The next part of the proof entails bounding
$ \E\big[\|\by-\bmuhat( T_{K}) \|^{2}_{n} - \|\by-\bg\|^{2}_{n}\big] $,
depending on the assumptions we make.
Note that for the constant output
$ \hat{y}_{\bt}(\bx) \equiv \overline y_{\bt} $,
we have
$Q=1$ and $\text{VC}(\calH)=1$.

\textbf{For Theorem~\ref{thm:oracle}:}
We bound
$ \E\big[\|\by-\bmuhat( T_{K}) \|^{2}_{n} - \|\by-\bg\|^{2}_{n}\big] $
using
Lemma~\ref{lmm:training}.
The inequality~\eqref{eq:oracle_unprune}
follows directly from \eqref{eq:ebound}
and the inequality~\eqref{eq:oracle_prune} follows directly from~\eqref{eq:eprimebound}.

\textbf{For Theorem~\ref{thm:fastoracle}:}
Taking $g=\mu \in \G $ and $d=p$, we bound
$ \E\big[\|\by-\bmuhat( T_{K}) \|^{2}_{n} - \|\by-\bg\|^{2}_{n}\big] $
using
Lemma~\ref{lmm:fasttraining}.
The inequality~\eqref{eq:rate_unprune} follows directly
from~\eqref{eq:ebound}.
To show~\eqref{eq:rate_prune}, we use~\eqref{eq:rate_unprune} and
\begin{align}
&  \inf_{K\geq 1}\Bigg\{
  \frac{2AV^2}{4^{(K-1)/q}}
  +
  C\frac{2^{K+1} p\log^{4/\gamma+1}(n)}{n}
  \Bigg\}
  \\ & \qquad =   2(2+q)
  \Bigg(\frac{AV^2}{q} \Bigg)^{q/(2+q)}
  \Bigg(\frac{Cp \log^{4/\gamma+1}(n)}{n}\Bigg)^{2/(2+q)}.
\end{align}
This completes the proof of both Theorem~\ref{thm:oracle}
and Theorem~\ref{thm:fastoracle}.
\end{proof}

\begin{proof}[Proof of Corollary~\ref{cor:fixed_d}]

Because our risk bounds allow for model misspecification, one can easily
establish consistency of $ \widehat{\mu}( T_{K}) $, even when $ \mu
\in \F \setminus \G$.
Recall that $\F = \text{cl}(\G)$, that is,
\begin{equation*}
  \F = \Bigg\{
  f(\bx) = \sum_{k=1}^M f_{k}(\ba_k^{\Trans}\bx),\; \ba_k \in \R^p, \;
  f_k: \R \mapsto \R \Bigg\}.
\end{equation*}
Importantly, $\F$ includes functions whose $\L_1$ norm may be infinite.
Consider such a function $\mu$ that belongs to $\F$ but not to $\G$.
Furthermore, grant Assumptions~\ref{as:unifprob} and~\ref{as:dgp},
which entail $ \mu \in \mathscr{L}_{\infty}(\mathbb{R}^p) $.
Let $\G' \subset \G$ denote the set of
all single-hidden layer feed-forward neural networks with activation function
that is non-constant and of bounded variation (and hence bounded). Then by
\citet[Theorem 1]{hornik1991approximation},  $ \G' $ is dense in $
\mathscr{L}_{\infty}(\mathbb{R}^p) \subset \mathscr{L}_2(\mathbb{P}_{\bx}) $.
Therefore, we can choose a sequence $\{g_n\} \subset \G' $, where each component
function $g_{nk}$ is bounded, non-constant, and of bounded variation, such that
$\lim_{n\rightarrow \infty}\|\mu-g_n\| = 0 $
and $ \|g_n\|_{\L_1} < \infty $ for
each $ n$. Define a subsequence $\{ g_{a_n}\} $ by $ a_n = \max\Big\{ m \leq n :
\|g_m\|_{\L_1} \leq D\sqrt{K_n/\log(n+1)} \Big\} $, where $ D $ is a positive
constant large enough so that $ \|g_1\|_{\L_1} \leq D\sqrt{K_n/\log(n+1)} $ for
all $ n $. Then, by construction, we have $\|\mu-g_{a_n}\| \rightarrow 0 $ and $
\|g_{a_n}\|_{\L_1} = o(\sqrt{K_n}) $ as $ n \rightarrow \infty $. Finally,
according to \eqref{eq:oracle_unprune} (and
similarly~\eqref{eq:oracle_prune}),
since $ \{g_{a_n}\} \subset \F $, we have
$ \lim_{n\rightarrow \infty}\mathbb{E}\big[\|\mu - \widehat{\mu}(
T_{K})\|^2\big] = 0$.

An analogous argument holds for the pruned tree $T_{\text{opt}}$.
\end{proof}

\begin{proof}[Proof of Corollary~\ref{cor:oracle_consistency}]
  The proof follows directly from the assumptions and Theorem~\ref{thm:oracle}.
\end{proof}

\begin{proof}[Proof of Theorem~\ref{thm:orf_oracle}]

Since we assume the subsample selection is independent of the splitting direction subset
selection at each node, we have the following decomposition of the law of the
process that governs each tree in the forest:
\[\Pi_{\Theta} = \Pi_{K} \times \Pi_{\I},\]
where $\I \subset \{1, \ldots, n\}$ is the set of indices of the subsampled data set of size $N$.

\subsubsection*{Part 1: Training error bound}
By Jensen's inequality,
\begin{equation*}
  \E_{\Pi_{\Theta}} \big[\|\mu -\muhat(\boldsymbol{\Theta})\|^2\big]
  \leq
  \E_{\Pi_{\Theta}} \big[\|\mu -\muhat(T_{K}(\Theta))\|^2\big].
\end{equation*}

Additionally, by the law of total expectation,
\begin{equation*}
\E_{\Pi_{\Theta}}\big[\|\mu-\muhat(T_K(\Theta))\|^2 \big] =
\E_{\I}\big[\E_{\Pi_K}\big[\|\mu-\muhat(T_K(\Theta))\|^2 \mid \I
\big]\big].
\end{equation*}

We can prove a training error bound analogous to that of
Lemma~\ref{lmm:fasttraining} by considering the modified definitions of excess
training error.
Define excess training error at each node conditional on the subsampled data as
\begin{equation*}
  R^{\I}_{K}(\bt)
  = \|\by-\overline{y}_{\bt}\|_{\bt, \I}^{2} - \|\by-\bg\|_{\bt}^{2},
\end{equation*}

and the excess training error of the tree as
\begin{equation*}
  R^{\I}_{K}
  = \norm{\by-\overline{y}}_{\I}^{2} - \norm{\by-\bg}_{\I}^{2}.
\end{equation*}

Since we do the subset selection independently at each node, any terminal node
$\bt$ of $T_{K-1}$ is independent of $\Pi_{K}$, conditional on $\Pi_{K-1}$.
We can then apply the law of iterated expectation to the conditional training
error, just as in the proof of Lemma~\ref{lmm:training}
and the bound follows directly.

\subsubsection*{Part 2: Oracle inequality}
The second part of this proof is analogous to the proof
Theorem~\ref{thm:oracle} where the averaging over the data set is replaced by
averaging over the subsampled data.

This completes the proof.
\end{proof}

\section{Technical Lemmas}\label{sec:techlem}

In this section, we present some technical lemmas that aid in the proof of our
main results and may also be of independent interest.

\subsection{Impurity Bound}
~\label{sec:impurity_bound}
Our first lemma establishes an important connection between the decrease in
impurity and the empirical node-wide excess risk.

\begin{lemma}[Impurity bound]~\label{lem:impurity_bound}
  Define $R_{K-1}(\bt) = \|\by-\hat{y}_{\bt}\|^{2}_{\bt} - \|\by-\bg\|^{2}_{\bt}$.
  Let $\bt$ be a terminal node of $T_{K-1}$, and
  assume $R_{K-1}(\bt) > 0$.
  Then, if $ g \in \G $,
  \begin{equation}
    \max_{(b, \ba) \in \mathbb{R}^{1+p}}\widehat{\Delta}(b, \ba, \bt)
    \geq
    \frac{w(\bt)R^2_{K-1}(\bt)}{\|g\|^2_{\L_1(\bt)}}.
  \end{equation}
\end{lemma}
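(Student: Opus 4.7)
My plan is to interpret $\widehat\Delta(b,\ba,\bt)$ as a squared one-dimensional projection coefficient on the node, and then to express the approximant $g \in \G$ as a signed integral mixture of candidate decision stumps whose total mass is controlled by $\|g\|_{\L_1(\bt)}$. For clarity I focus on the canonical constant-output case $\hat y_\bt = \overline y_\bt$, so that $R_{K-1}(\bt) = \|y-\overline y_\bt\|_\bt^2 - \|y-g\|_\bt^2$; the general $\mathrm{span}(\calH)$-output case will follow the same blueprint after replacing the single centered-stump direction with the full $\calH$-orthogonal complement $\mathcal{V}_\bt^{\perp}\subset\mathcal{U}_\bt$ introduced in the proof of Lemma~\ref{lmm:hilbert}.

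First I will obtain a clean formula for the decrease in impurity. Setting $h_{b,\ba}(\bx)=\Indicator(\ba^\Trans\bx \geq b)$ and $\overline h_{b,\ba} = n(\bt)^{-1}\sum_{\bx_i\in\bt}h_{b,\ba}(\bx_i)$, either a direct algebraic expansion of the two squared residual sums in \eqref{eq:sse} or specialization of Lemma~\ref{lmm:hilbert} yields
\[
\widehat\Delta(b,\ba,\bt)\;=\;w(\bt)\cdot\frac{|\langle y-\overline y_\bt,\; h_{b,\ba}-\overline h_{b,\ba}\rangle_\bt|^2}{\|h_{b,\ba}-\overline h_{b,\ba}\|_\bt^2}.
\]
Because $h_{b,\ba}$ is Bernoulli on $\bt$, $\|h_{b,\ba}-\overline h_{b,\ba}\|_\bt \leq 1/2$, and therefore
\[
|\langle y-\overline y_\bt,\; h_{b,\ba}-\overline h_{b,\ba}\rangle_\bt|\;\leq\;\tfrac12 w(\bt)^{-1/2}\sqrt{\max_{b',\ba'}\widehat\Delta(b',\ba',\bt)}. \qquad(\star)
\]
In parallel, since $y-\overline y_\bt$ is orthogonal to constants in $\langle\cdot,\cdot\rangle_\bt$, a Pythagorean expansion of $\|y-g\|_\bt^2$ around $\overline y_\bt$ gives
\[
R_{K-1}(\bt)\;=\;2\langle y-\overline y_\bt,\; g-\overline g_\bt\rangle_\bt - \|g-\overline y_\bt\|_\bt^2\;\leq\;2\langle y-\overline y_\bt,\; g-\overline g_\bt\rangle_\bt.
\]

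The heart of the argument is the representation of $g$ as an integral mixture of stumps. Fix any ridge expansion $g(\bx)=\sum_k g_k(\ba_k^\Trans\bx)$ with each $g_k$ of finite variation on $I_k=I(\ba_k,\bt)$. Lebesgue--Stieltjes integration gives $g_k(z)=g_k(\min I_k)+\int_{I_k}\Indicator(u\leq z)\,dg_k(u)$, so that on $\bt$, after subtracting node averages,
\[
g(\bx)-\overline g_\bt\;=\;\sum_k\int_{I_k}[h_{u,\ba_k}(\bx)-\overline h_{u,\ba_k}]\,dg_k(u).
\]
Substituting this into the inner product from the previous step, interchanging the finite empirical sum with the Stieltjes integral via Fubini applied to the Jordan decomposition of each $dg_k$, and invoking $(\star)$ pointwise in $u$ produces
\[
|\langle y-\overline y_\bt,\; g-\overline g_\bt\rangle_\bt|\;\leq\;\tfrac12 w(\bt)^{-1/2}\sqrt{\max_{b',\ba'}\widehat\Delta(b',\ba',\bt)}\sum_k V(g_k,\ba_k,\bt).
\]
Combining with the Pythagorean inequality (whose factor of $2$ exactly cancels the $1/2$), squaring, and then passing to the infimum over admissible ridge representations of $g$ so that $\sum_k V(g_k,\ba_k,\bt)\to\|g\|_{\L_1(\bt)}$ yields precisely the claim.

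The main subtlety is the signed integral representation in Step 3: I need each ridge profile $g_k$ to have finite total variation on $I_k$, which is secured by the finiteness of $\|g\|_{\L_1(\bt)}$ together with the attainment convention stated in the appendix preamble, and the Fubini interchange must be justified against the positive measure $|dg_k|$ from the Jordan decomposition. The sharp constant in the lemma ultimately hinges on the Bernoulli bound $\|h_{b,\ba}-\overline h_{b,\ba}\|_\bt\leq 1/2$ absorbing exactly the factor of $2$ from the excess-risk inequality---both constants conspire to leave the tidy ratio $w(\bt)R_{K-1}^2(\bt)/\|g\|_{\L_1(\bt)}^2$ with no numerical overhead.
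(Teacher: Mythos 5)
Your argument is correct and rests on the same core idea as the paper's proof: represent $g-\overline g_\bt$ as a (signed) integral mixture of decision stumps whose total mass is controlled by $\sum_k V(g_k,\ba_k,\bt)$, then dualize against the residual $y-\overline y_\bt$. Where you depart is in the packaging, and each deviation is a genuine simplification. First, the paper lower-bounds $\max_{(b,\ba)}\widehat\Delta(b,\ba,\bt)$ by constructing an adaptive probability measure $\Pi(b,\ba)$ with density proportional to $|g_k'(b)|\sqrt{\Prob(\bt_L)\Prob(\bt_R)}$, applies the ``max $\geq$ average'' step, and then invokes Jensen; you instead expand $\langle y-\overline y_\bt,\,g-\overline g_\bt\rangle_\bt$ directly as a Stieltjes integral and apply your pointwise bound $(\star)$ under the integral sign --- a Hölder-type duality that reaches the same inequality without introducing $\Pi$ or Jensen. (Note that your Bernoulli bound $\|h_{b,\ba}-\overline h_{b,\ba}\|_\bt \leq 1/2$ plays exactly the role of the paper's $\sqrt{\Prob(\bt'_L)\Prob(\bt'_R)}\leq 1/2$ step in bounding the normalizing constant of $\Pi$.) Second, the paper lower-bounds the inner product $\langle y-\hat y_\bt,g\rangle_\bt$ via Cauchy--Schwarz followed by AM--GM; you use the exact second-order (Pythagorean) expansion $R_{K-1}(\bt) = 2\langle y-\overline y_\bt,\,g-\overline g_\bt\rangle_\bt - \|g-\overline y_\bt\|^2_\bt \leq 2\langle y-\overline y_\bt,\,g-\overline g_\bt\rangle_\bt$, which is shorter and yields the same factor of $1/2$. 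Two minor remarks. You restrict to the constant-output node case; this matches the lemma as used in the main text, and your sketch of the extension (replace the single stump direction with $\mathbf{\Psi}_\bt^{\perp}$) is the correct one, though spelling it out would require replacing $\overline y_\bt$ by $\hat y_\bt$ in the Pythagorean step, which is precisely why the paper works with $\langle y-\hat y_\bt, g\rangle_\bt$ rather than centering $g$. Finally, your last step (``passing to the infimum over admissible ridge representations so that $\sum_k V(g_k,\ba_k,\bt)\to\|g\|_{\L_1(\bt)}$'') inherits the same implicit identification of the local $\L_1$ norm with the infimum over exact representations that the paper also uses (via its WLOG preamble on attainment of the infimum), so this is not a gap relative to the paper, but you might note it depends on that convention.
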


\begin{proof}[Proof of Lemma~\ref{lem:impurity_bound}]
Assume that $g \in \G$,
$ g(\bx) = \sum_{k=1}^Mg_k(\ba_k^{\Trans}\bx) $, and that
$g(\bx_i)$ is not constant across $ \bx_i \in \bt $, the
result being trivial otherwise.
We use $g_{k}^{\prime}$ to denote the divided difference of
$g_{k}$ of successive ordered datapoints in the
$\ba_k$ direction in node $\bt$.
That is, if the data
$ \{(y_i, \bx_i^{\Trans}): \bx_i \in \bt \} $
is re-indexed so that
$\ba_k^{\Trans}\bx_{1}
\leq \ba_k^{\Trans}\bx_{2}
\leq \cdots
\leq \ba_k^{\Trans}\bx_{n(\bt)} $, then
\begin{equation}
  \label{eq:gderiv}
  g_{k}^{\prime}(b) =
  \frac{
  g_{k}(\ba_k^{\Trans}\bx_{i+1})
  - g_{k}(\ba_k^{\Trans}\bx_{i})
  }{
  \ba_k^{\Trans}\bx_{i+1}
  - \ba_k^{\Trans}\bx_{i}},
  \quad \text{for} \
  \ba_k^{\Trans}\bx_{i} \leq b < \ba_k^{\Trans}\bx_{i+1}
  \quad \text{and} \
  i = 1, \ldots, n(\bt)-1,
\end{equation}
where $ g_{k}^{\prime}(b) = 0 $ if $ b= \ba_k^{\Trans}\bx_{i} = \ba_k^{\Trans}\bx_{i+1} $.
Let
\begin{equation}
  \label{eq:rd}
  \frac{\diff\Pi(b, \ba_k)}{\diff(b, \ba_k)}
  =
  \frac{ | g_k^{\prime}(b) |
  \sqrt{\Prob(\bt_L) \Prob(\bt_{R})}
  }{
  \sum_{k^{\prime}=1}^M\int
  | g_{k^{\prime}}^{\prime}(b^{\prime}) |
  \sqrt{\Prob(\bt_L^{\prime}) \Prob(\bt_{R}^{\prime})} \diff b^{\prime} }
\end{equation}
denote the Radon-Nikodym derivative
(with respect to the Lebesgue measure and counting measure)
of a probability measure on $ (b, \ba) $ after splitting node
$\bt$ at the decision boundary $ \ba_k^{\Trans}\bx  = b $.
Here
$\bt_L=\bt_L(b, \ba_k)$ and $\bt_R = \bt_R(b, \ba_k)$
are the child nodes of $\bt$ after splitting at $ \ba_k^\Trans \bx = b$,
and
$\Prob(\bt_L) = n(\bt_L)/n(\bt)$ and $\Prob(\bt_R) = n(\bt_R)/n(\bt)$
are the proportions of observations in node
$\bt$ that is in $\bt_L$ and $\bt_R$, respectively.
Similarly,
$\bt^{\prime}_L=\bt_L^{\prime}(b^{\prime}, \ba_{k^{\prime}})$
and $\bt^{\prime}_R = \bt^{\prime}_R(b^{\prime}, \ba_{k^{\prime}})$
are the child nodes of $\bt$ after splitting at
$ \ba_{k^{\prime}}^\Trans \bx = b^{\prime}$.
Additionally, define
\begin{equation*}
  \tilde{\psi}_{\bt}(\bx)
  = \frac{\Indicator(\bx \in \bt_L)\Prob(\bt_R)
    - \Indicator(\bx \in \bt_R)\Prob(\bt_L)}
  {\sqrt{\Prob(\bt_L)\Prob(\bt_R)}}
  = \textstyle\sqrt{w(\bt)}\psi_{\bt}(\bx).
\end{equation*}
Note that
$  \{ \tilde{\psi}_{\bt}: \bt\in [T_K]\} $
is an orthonormal dictionary with respect to the node-wise inner product,
$ \langle \cdot, \cdot \rangle_{\bt} $.

Because a maximum is larger than an average,
$
\max_{(b, \ba) \in \mathbb{R}^{1+p}}\widehat{\Delta}(b, \ba, \bt)
\geq
\int \widehat{\Delta}(b, \ba_k, \bt) \diff \Pi(b, \ba_k).
$
Then, using the identity from~\eqref{eq:impuritydec}
and the fact that the decision stump
$ \psi_{\bt} $ belongs to $\Psi_{\bt}^{\perp}$ (see~\eqref{eq:stumps_norm}),
we have
\begin{equation}
  \label{eq:deltaboundinitial}
  \max_{(b, \ba) \in \mathbb{R}^{1+p}}\widehat{\Delta}(b, \ba, \bt)
  \geq
  \int \sum_{\psi \in \Psi_{\bt}^{\perp}}
  |\langle \by, \bpsi \rangle_n|^2
  \diff \Pi(b, \ba_k)
  \geq
  \int |\langle \by, \bpsi_{\bt} \rangle_n|^2
  \diff \Pi(b, \ba_k).
\end{equation}
By the definition of $\tilde{\psi}_{\bt}$
and Jensen's inequality,
\begin{equation}
  \label{eq:deltabound}
    \int |\langle \by, \bpsi_{\bt} \rangle_n|^2
    \diff \Pi(b, \ba_k)
     =
    w(\bt)
    \int | \langle \by, \tilde{\bpsi}_{\bt}\rangle_{\bt} |^{2}
    \diff \Pi(b, \ba_k)
   \geq
    w(\bt)
    \Bigg(
    \int |
    \langle \by, \tilde{\bpsi}_{\bt}\rangle_{\bt} |
    \diff \Pi(b, \ba_k)
    \Bigg)^{2}.
\end{equation}

Our next task will be to lower bound the expectation
$ \int | \langle \by, \tilde{\bpsi}_{\bt}\rangle_{\bt} | \diff \Pi(b, \ba_k) $.
First note the following identity:
\begin{equation*}
  \Indicator(\bx\in \bt_L)\Prob(\bt_{R})
  -
  \Indicator(\bx\in \bt_{R})\Prob(\bt_L)
  =
  -
  (\Indicator (\bx^{\Trans}\ba>b) - \Prob(\bt_{R}))
  \Indicator(\bx\in \bt),
\end{equation*}
which means
\begin{equation*}
\textstyle\sqrt{\Prob(\bt_L) \Prob(\bt_{R})}
  \langle \by, \tilde{\bpsi}_{\bt} \rangle_{\bt}
  = \textstyle\sqrt{\Prob(\bt_L) \Prob(\bt_{R})}
  \langle \by-\hat{y}_{\bt}, \tilde{\bpsi}_{\bt} \rangle_{\bt}
  = -\langle \by-\hat{y}_{\bt}, \Indicator (\bx^{\Trans}\ba>b)\rangle_{\bt}.
\end{equation*}
Using this identity together with the
empirical measure (defined in~\eqref{eq:rd}),
we see that the expectation in~\eqref{eq:deltabound}
is lower bounded by
\begin{equation}
\label{eq:deltaboundintermediate}
\begin{aligned}
  \int |\langle
  \by-\hat{y}_{\bt}, \tilde{\bpsi}_{\bt}
  \rangle_{\bt}| \diff \Pi(b, \ba_k)
  &= \frac{
    \sum_{k=1}^M \int
    |g_{k}^{\prime}(b)|
    |\langle \by-\hat{y}_{\bt}, \Indicator (\ba_k^{\Trans}\bx>b)\rangle_{\bt}|
    \diff b
    }{
    \sum_{k^{\prime}=1}^M\int
    |g_{k^{\prime}}^{\prime}(b^{\prime})|
    \sqrt{\Prob(\bt_L^{\prime}) \Prob(\bt_{R}^{\prime})} \diff b^{\prime}
    }
  \\
  & \geq
    \frac{
    | \langle
    \by-\hat{y}_{\bt},
    \sum_{k=1}^M \int
    g_{k}^{\prime}(b)
    \Indicator (\ba_k^{\Trans}\bx>b)\diff b
    \rangle_{\bt}
    | }{
    \sum_{k^{\prime}=1}^M\int
    |g_{k^{\prime}}^{\prime}(b^\prime)|
    \sqrt{\Prob(\bt_L^{\prime}) \Prob(\bt_{R}^{\prime})}
    \diff b^{\prime} }.
\end{aligned}
\end{equation}
Then, by the definition of $g^{\prime}_k$, we have
$ \sum_{k=1}^M \int
g_{k}^{\prime}(b)
\Indicator (\ba_k^{\Trans}\bx_i>b)\diff b = g(\bx_i) - g(\bx_1) $
for each $ i = 1, 2, \dots, n(\bt) $, and hence
\begin{equation}
  \label{eq:deltaboundfinal}
  \Big\langle
  \by-\hat{y}_{\bt},
  \sum_{k=1}^M \int
  g_{k}^{\prime}(b)
  \Indicator (\ba_k^{\Trans}\bx>b)\diff b
  \Big\rangle_{\bt}
  =
  \langle \by-\hat{y}_{\bt}, \bg\rangle_{\bt}.
\end{equation}

In light
of~\eqref{eq:deltaboundinitial},~\eqref{eq:deltabound},~\eqref{eq:deltaboundintermediate},
and~\eqref{eq:deltaboundfinal}, we obtain
\begin{equation}
  \label{eq:intermediatebound}
  \max_{(b, \ba) \in \mathbb{R}^{1+p}}\widehat{\Delta}(b, \ba, \bt)
  \geq
  \frac{w(\bt)| \langle \by-\hat{y}_{\bt}, \bg\rangle_{\bt} |^{2}
  }{
    ( \sum_{k^{\prime}=1}^M
    \int
    |g_{k^{\prime}}^{\prime}(b^\prime)|
    \sqrt{\Prob(\bt_L^{\prime}) \Prob(\bt_{R}^{\prime})}
    \diff b^{\prime} )^{2} }.
\end{equation}

Next, we derive upper and lower bounds on the
denominator and numerator of~\eqref{eq:intermediatebound}, respectively.
First, we look at the denominator.
Note that for each $k'$,
the integral can be decomposed as follows:
\begin{equation}
  \label{eq:piecewisebound}
  \int
  |g_{k^{\prime}}^{\prime}(b^\prime)|
  \textstyle\sqrt{\Prob(\bt_L^{\prime}) \Prob(\bt_{R}^{\prime})}
  \diff b^{\prime}
    =
    \sum_{i=1}^{n(\bt)-1} \int_{\{b^{\prime}:n(\bt^{\prime}_L)=i\}}
    |g_{k^{\prime}}^{\prime}(b^\prime)|
    \textstyle\sqrt{(i/n(\bt))(1-i/n(\bt))}
    \diff b^{\prime}.
\end{equation}
Then, using the fact that${\textstyle\sqrt{(i/n(\bt))(1-i/n(\bt))}} \leq 1/2$
for $1 \leq i \leq n(\bt)$,
and that the end points of each integral in the sum of~\eqref{eq:piecewisebound}
can be explicitly identified from the definition of $g^{\prime}_{k^{\prime}}$ in~\eqref{eq:gderiv},
\begin{equation}
  \label{eq:denomboundpart}
  \int
  |g_{k^{\prime}}^{\prime}(b^\prime)|
  {\textstyle\sqrt{\Prob(\bt_L^{\prime}) \Prob(\bt_{R}^{\prime})}}
  \diff b^{\prime}
  \leq
  \frac{1}{2}
  \sum_{i=1}^{n(\bt)-1}\int_{\{b^{\prime}:n(\bt^{\prime}_L)=i\}}
  |g_{k^{\prime}}^{\prime}(b^\prime)|
  \diff b^{\prime}
  =
  \frac{1}{2}
  \sum_{i=1}^{n(\bt)-1}
  \int_{\ba_{k^{\prime}}^\Trans\bx_i}^{\ba_{k^{\prime}}^\Trans\bx_{i+1}}
  |g_{k^{\prime}}^{\prime}(b^\prime)|
  \diff b^{\prime}.
\end{equation}
By the definition of $ g^{\prime}_{k^{\prime}}$ as a divided
difference~\eqref{eq:gderiv}
and the definition of total variation,
for each $k^{\prime}$,
\begin{equation}
  \label{eq:denombound}
  \sum_{i=1}^{n(\bt)-1}
  \int_{\ba_{k^{\prime}}^\Trans\bx_i}^{\ba_{k^{\prime}}^\Trans\bx_{i+1}}
  |g_{k^{\prime}}^{\prime}(b^\prime)|
  \diff b^{\prime}
  =
  \sum_{i=1}^{n(\bt)-1}
  |g_{k^{\prime}}(\ba_{k^{\prime}}^\Trans\bx_{i+1})
  - g_{k^{\prime}}(\ba_{k^{\prime}}^\Trans\bx_{i})|
  \leq
  V(g_{k^{\prime}}, \ba_{k^{\prime}}, \bt).
\end{equation}

Combining~\eqref{eq:denomboundpart} and~\eqref{eq:denombound}
and plugging the result into the summation in the
denominator of~\eqref{eq:intermediatebound},
we get
\begin{equation*}
  \sum_{k^{\prime}=1}^M
  \int
  |g_{k^{\prime}}^{\prime}(b^{\prime})|
  {\textstyle\sqrt{\Prob(\bt_L^{\prime}) \Prob(\bt_{R}^{\prime})}}
  \diff b^{\prime}
  \leq
  \frac{1}{2}
  \sum_{k^{\prime}=1}^MV(g_{k^{\prime}}, \ba_{k^{\prime}}, \bt)
  = \frac{1}{2} \|g\|_{\L_1(\bt)}.
\end{equation*}

Next, we lower bound the numerator in~\eqref{eq:intermediatebound}.
Using the Cauchy-Schwarz inequality and the fact that
$\langle \by-\hat{y}_{\bt}, \by\rangle_{\bt}
= \|\by-\hat{y}_{\bt}\|_{\bt}^{2}$,
we obtain
\begin{equation}
  \label{eq:innerprod}
  \langle \by-\hat{y}_{\bt}, \bg \rangle_{\bt}
  =
  \langle \by-\hat{y}_{\bt}, \by\rangle_{\bt}
  - \langle \by-\hat{y}_{\bt}, \by-\bg\rangle_{\bt}
  \geq \|\by-\hat{y}_{\bt}\|_{\bt}^{2}
  - \|\by-\hat{y}_{\bt}\|_{\bt}\|\by-\bg\|_{\bt}.
\end{equation}

By the AM-GM inequality, we know that
$\|\by-\hat{y}_{\bt}\|_{\bt}\|\by-\bg\|_{\bt}\leq
\frac{1}{2}(\|\by-\hat{y}_{\bt}\|_{\bt}^{2} +\|\by-\bg\|_{\bt}^{2})$.
Plugging this
into~\eqref{eq:innerprod}, we get
$$
  \langle \by-\hat{y}_{\bt}, \bg \rangle_{\bt}
  \geq
  \frac{1}{2}(\|\by-\hat{y}_{\bt}\|_{\bt}^{2} - \|\by-\bg\|_{\bt}^{2}).
$$

Now, squaring both sides and using the assumption that $R_{K-1}(\bt)>0$, we have
\begin{equation}
  |\langle \by-\hat{y}_{\bt}, \bg \rangle_{\bt} |^{2}
  \geq
  \frac{1}{4}(\|\by-\hat{y}_{\bt}\|_{\bt}^{2} - \|\by-\bg\|_{\bt}^{2})^{2}
  = \frac{1}{4}R^2_{K-1}(\bt).
\end{equation}

Now we can put the bounds on the numerator and denominator together to get the
desired result:
\begin{equation}
  \max_{(b, \ba) \in \mathbb{R}^{1+p}}\widehat{\Delta}(b, \ba, \bt)
  \geq
  \frac{w(\bt)R^2_{K-1}(\bt)}
  {\|g\|^2_{\L_1(\bt)}}.
\end{equation}
\end{proof}

\subsection{Recursive Inequality}
Here we provide a solution to a simple recursive inequality.
\begin{lemma}
~\label{lem:rec}
 Let $\{a_k\}$ be a decreasing sequence of numbers and
 $\{b_k\}$ be a positive sequence numbers satisfying the following recursive
 expression:
 \begin{equation*}
   a_k
   \leq
   a_{k-1}(1-b_k a_{k-1}), \quad k = 1, 2, \dots, K.
 \end{equation*}
 Then,
$$
   a_K
   \leq
   \frac{1}{\sum_{k=1}^K b_k}, \quad K = 1, 2, \dots
$$
\end{lemma}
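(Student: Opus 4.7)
The plan is to argue by taking reciprocals and telescoping. The key observation is that the recursion $a_k \leq a_{k-1}(1 - b_k a_{k-1})$ is exactly the kind that, when the $a_k$ are positive, can be linearized in $1/a_k$ via the elementary inequality $1/(1-x) \geq 1 + x$ for $x < 1$ (equivalently, by dividing through directly). This is a standard device in the analysis of greedy approximation in Hilbert spaces (e.g.\ Jones' lemma), which is appropriate given the paper's framing of CART as a local orthogonal greedy procedure.

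I would split into two cases according to the sign of the sequence. First, if $a_j \leq 0$ for some $j \in \{0,1,\dots,K\}$, then monotonicity of $\{a_k\}$ forces $a_K \leq a_j \leq 0$, and since $\sum_{k=1}^K b_k > 0$ the bound $a_K \leq 1/\sum_{k=1}^K b_k$ holds trivially. Hence I may assume $a_k > 0$ for every $k \in \{0,\dots,K\}$.

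Under this positivity assumption, I would rewrite the hypothesis as $a_k \leq a_{k-1} - b_k a_{k-1}^2$ and divide both sides by the positive quantity $a_k a_{k-1}$. This yields
\[
\frac{1}{a_{k-1}} \leq \frac{1}{a_k} - b_k\,\frac{a_{k-1}}{a_k},
\]
so, using the decreasing property $a_{k-1} \geq a_k > 0$ to drop $a_{k-1}/a_k \geq 1$, I obtain the telescoping estimate
\[
\frac{1}{a_k} \geq \frac{1}{a_{k-1}} + b_k.
\]
Summing this inequality from $k=1$ to $K$ gives $1/a_K \geq 1/a_0 + \sum_{k=1}^K b_k \geq \sum_{k=1}^K b_k$, and inverting produces the desired bound $a_K \leq 1/\sum_{k=1}^K b_k$.

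There is essentially no hard step: the only subtlety is justifying the division by $a_{k-1} a_k$, which is why the separation into the non-positive and positive cases is needed at the outset. The argument works without any size assumption on $b_k a_{k-1}$ because if $1 - b_k a_{k-1} \leq 0$, then $a_k \leq a_{k-1}(1 - b_k a_{k-1}) \leq 0$ and we fall back into the trivial case already handled.
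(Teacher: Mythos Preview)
Your proof is correct. The case split on the sign of the $a_k$ is handled cleanly, and once all terms are positive the division by $a_k a_{k-1}$ and the use of $a_{k-1}/a_k \geq 1$ are fully justified; the telescoping then gives the bound immediately.

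The paper takes a different route: it argues by induction on $K$. For the base case it uses the scalar inequality $x(1-b x)\leq 1/(4b)$ to get $a_1\leq 1/(4b_1)<1/b_1$. For the inductive step it splits into two sub-cases according to whether $a_{K-1}$ already lies below $1/\sum_{k=1}^{K}b_k$ (trivial by monotonicity) or lies in the window $\big[1/\sum_{k=1}^{K}b_k,\;1/\sum_{k=1}^{K-1}b_k\big]$, in which case one plugs these two bounds into $a_K\leq a_{K-1}(1-b_K a_{K-1})$ and simplifies. Your reciprocal-telescoping argument is the classical ``Jones lemma'' device from greedy approximation and is arguably more transparent: it linearizes the recursion in $1/a_k$ and avoids the two-sided case analysis and the quadratic-maximum trick. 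The paper's induction, on the other hand, never needs the extra observation $1/a_0>0$ and works directly with the sequence rather than its reciprocals. Both are short and elementary; yours aligns more explicitly with the orthogonal-greedy viewpoint the paper itself emphasizes.
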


\begin{proof}[Proof of Lemma~\ref{lem:rec}]
We may assume without loss of generality that $ a_{K-1} > 0 $;
otherwise the result holds trivially since
$ a_K \leq a_{K-1} \leq 0 \leq \frac{1}{\sum_{k=1}^K b_k} $.
For $K=1$,
\begin{equation*}
  a_1
  \leq
  a_0(1-b_1a_0)
  \leq
  \frac{1}{4b_1}
  < \frac{1}{b_1}.
\end{equation*}

For $ K > 1 $, assume
$a_{K-1}\leq \frac{1}{\sum_{k=1}^{K-1} b_k}$.
Then, either
$a_{K-1}\leq \frac{1}{\sum_{k=1}^{K} b_k}$,
in which case we are done since $a_K \leq a_{K-1}$, or,
$a_{K-1}\geq \frac{1}{\sum_{k=1}^{K} b_k}$, in which case,
\[
  a_K
    \leq
    a_{K-1}(1-b_{K}a_{K-1})
    \leq
    \frac{1}{\sum_{k=1}^{K-1}b_k}
    \Bigg( 1- \frac{b_K}{\sum_{k=1}^{K} b_k} \Bigg)
    =
    \frac{1}{\sum_{k=1}^{K} b_k}.
    \qedhere
\]
\end{proof}

\subsection{Sedrakyan's Inequality}
For completeness, we reproduce Sedrakyan's inequality
\citep{sedrakyan1997applications} in its generalized form below.
\begin{lemma}[Sedrakyan's inequality \citep{sedrakyan1997applications}]
~\label{lmm:sedrakyan}
  Let $ U $ and $ V $ be two non-negative random variables with $ V > 0 $ almost
  surely. Then
  \[
  \E\Bigg[\frac{U}{V}\Bigg] \geq \frac{\big(\E\big[\sqrt{U}\big]\big)^2}{\E[V]}.
  \]
\end{lemma}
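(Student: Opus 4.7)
The plan is to recognize this as the probabilistic form of Cauchy--Schwarz (a.k.a.\ the Engel/Titu form), and so to deduce it from the standard Cauchy--Schwarz inequality for expectations, $\big(\E[XY]\big)^2 \leq \E[X^2]\E[Y^2]$, by a judicious choice of $X$ and $Y$.

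Concretely, I would first note that since $V>0$ almost surely and $U \geq 0$, the random variables $\sqrt{U/V}$ and $\sqrt{V}$ are well-defined and nonnegative almost surely. Set $X = \sqrt{U/V}$ and $Y = \sqrt{V}$, so that $XY = \sqrt{U}$, $X^2 = U/V$, and $Y^2 = V$. Applying Cauchy--Schwarz to this pair yields
\[
\big(\E[\sqrt{U}]\big)^2 \;=\; \big(\E[XY]\big)^2 \;\leq\; \E[X^2]\,\E[Y^2] \;=\; \E\!\left[\tfrac{U}{V}\right]\E[V].
\]
Rearranging (noting that $\E[V] \geq 0$, and if $\E[V] = 0$ then $V = 0$ a.s., contradicting $V>0$ a.s.\ unless the underlying probability space is trivial, a case which is handled directly), we obtain the claim $\E[U/V] \geq (\E[\sqrt{U}])^2 / \E[V]$.

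There is essentially no technical obstacle here: the only care needed is (i) to verify measurability and almost-sure well-definedness of $X$ and $Y$ under the standing assumption $V>0$ a.s., and (ii) to permit the right-hand side to be interpreted as $+\infty$ when $\E[\sqrt{U}] = +\infty$, in which case the inequality is trivial, and as $0$ when $\E[\sqrt U] = 0$. The proof is otherwise a single line.
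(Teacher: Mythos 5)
Your proof is correct and uses exactly the same argument as the paper: apply Cauchy--Schwarz to the factorization $\sqrt{U} = \sqrt{U/V}\cdot\sqrt{V}$ and rearrange. The extra care you take with edge cases ($\E[V]=0$, infinite expectations) is reasonable but not present in, nor needed for, the paper's one-line proof.
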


\begin{proof}[Proof of Lemma~\ref{lmm:sedrakyan}]
By the Cauchy-Schwarz inequality,
\[
\E\big[\sqrt{U}\big]
= \E\Bigg[\sqrt{\frac{U}{V}}\sqrt{V}\Bigg]
\leq \sqrt{\E\Bigg[\frac{U}{V}\Bigg]}\sqrt{\E[V]}.
\]
Rearranging the above inequality gives the desired result.
\end{proof}

\bibliographystyle{apalike}
\bibliography{tree.bib}
\end{document}